\def\supp{\operatorname{supp}}
\def\Prob{\operatorname{Prob}}
\newtheorem{theorem}{Theorem}
\newtheorem{proposition}{Proposition}
\newtheorem{corollary}{Corollary}
\theoremstyle{remark}
\newtheorem{remark}{Remark}
\begin{document}
\baselineskip19pt

\title[Stochastic semigroups and cell cycle]{Applications of stochastic semigroups to cell cycle models}
\author[K. Pich\'or]{Katarzyna Pich\'or}
\address{K. Pich\'or, Institute of Mathematics,
University of Silesia, Bankowa 14, 40-007 Kato\-wi\-ce, Poland.}
\email{katarzyna.pichor@us.edu.pl}
\author[R. Rudnicki]{Ryszard Rudnicki}
\address{R. Rudnicki, Institute of Mathematics,
Polish Academy of Sciences, Bankowa 14, 40-007 Katowice, Poland.}
\email{rudnicki@us.edu.pl}
\keywords{cell cycle; positive linear operator; stochastic semigroup; asymptotic stability; Markov process}
\subjclass[2010]{Primary: 47D06; Secondary: 60J75 92C37}

\thanks{This research was partially supported by
the  National Science Centre (Poland)
Grant No. 2017/27/B/ST1/00100}
\thanks{$^*$ Corresponding author: Ryszard Rudnicki}

\begin{abstract}
We consider a generational  and continuous-time two-phase model of the cell cycle. The first model is given by a stochastic operator, and the second by
a piecewise deterministic Markov process. In the second case we also introduce a stochastic semigroup which describes the  evolution of densities of the process.
We study long-time behaviour of these models.
In particular we prove  theorems on asymptotic stability and sweeping.
We also show the relations between both models.
\end{abstract}

\maketitle

\section{Introduction}
\label{intro}

The modeling of the cell cycle has a long history \cite{Pujo-Menjouet}. 
The core of the theory was formulated in the late  sixties~\cite{LR,Rubinow,Foerster}.   The  important  role in these models is played by  maturity  of cells.
A lot of new models appear in the eighties and we can divide them into two groups.
The first group contains  discrete-time models (generational models) which  describe the relation between the initial maturity of 
mother and  daughter  cells \cite{LM-cc,TH-cc,Tyrcha}. The second group is formed by continuous-time  models characterizing
the  time evolution of  distribution  of cell maturity \cite{CMR,M-R,Rotenberg} or cell size \cite{DHT,GH}.
The long-time behaviour  of  continuous-time models was studied  in \cite{BPR,Mac-Tyr,Pichor-MCM,RP-M,RP}.
Mathematical modelling of cell cycle is still  important and topical 
and new interesting models appear \cite{ACHQ,ACM,CDGV,FKB,LRB}.

In this paper we consider two-phase models of the cell cycle.
 The cell cycle is a series of events that
 take place in a cell leading to its replication.
Usually the cell cycle is divided
 into four phases \cite{ABLRRW,John,Morgan}.
The first one is the growth phase $G_1$ with
synthesis of various enzymes.
The duration of the phase $G_1$ is
highly variable even
for cells from one species. The DNA synthesis takes place
in the second phase $S$.
In the third  phase $G_2$
significant protein synthesis occurs,
which is required during the process of mitosis.
The last phase $M$
consists of nuclear division and cytoplasmic division.
Some models of cell cycle contains also a $G_0$ phase, 
where the cell has left cycle and has stopped dividing.
 From a mathematical point of view we can
simplify  the model by considering only two phases \cite{BT,SM,Tyrcha}.
The first phase is the growth phase $G_1$ and it is also called the 
\textit{resting phase}.
The second phase called the \textit{proliferating phase}
 consists of the phases $S$, $G_2$,
and $M$.
The duration  of the first phase is random variable  and of the second  phase is
almost constant. A cell can move from the resting phase to the proliferating phase
with some rate, which depends on the maturity of a cell.
Each cell is  characterized by its age  and maturity.
The maturity can be size, volume or contents of genetic material.

We investigate discrete and continuous-time models characterized by 
the same parameters. 
The discrete model is slightly extended version of that 
of Tyrcha \cite{Tyrcha}. In our model the growth of maturity in both phases is described by different functions. We include the derivation of this model to have the paper self-contained. A cell can move from the resting phase to the proliferating phase with the rate which depends on its maturity. Then it spends a fixed time $\tau$ in the second phase and divides into two cells which have the same maturity. The maturity of a daughter cell is determined by the maturity of the mother cell at the moment of division.
The mathematical model is given by a stochastic operator $P$  which describes the relation between densities  of maturity of new born cells in consecutive generations. 

The  continuous-time  model is given  by  a piecewise deterministic Markov process (PDMP), which describes consecutive descendants of a single cell. 
PDMPs are nowadays widely used in modeling of biological phenomena \cite{PR-Stein,RT-K-k}. Some PDMPs were applied to  describe  
statistical dynamics of recurrent biological events and could be used in cell cycle models \cite{LMT,Mac-Tyr}. The main problem 
with application of PDMPs to a two-phase model is 
to construct a stochastic process which has the Markov property.
In the first phase the state of a cell depends on its maturity, but 
the second phase has a constant length, so the state of the cell
depends on time of visit this phase.
The novelty of our model is that it consists a system of three differential equations which describes age, maturity, and phase of a cell.
We consider two different jumps. The first jump is a stochastic one,
when a cell enters the proliferating phase. 
The second one is deterministic at the moment of division. 
After the division of a cell, we consider time evolution of its daughter cell, etc. 
Since we include into the model age, maturity and phase of a cell, our process 
satisfies the Markov property.
The evolution  of densities of the PDMP corresponding to our model 
leads directly to a   continuous-time stochastic semigroup
$\{P(t)\}_{t\ge0}$. The densities of the PDMP satisfy 
a  system of partial differential equations with boundary conditions similar to that  in \cite{M-R}. 
It is interesting that the density of maturity satisfies a
first order partial differential equation
in which there is a  temporal retardation
as well as a nonlocal dependence in the maturation variable (\ref{posr-RF}).
This observation suggests that even rather complicated transport equations 
can be introduced by means of simple PDMPs and one can find 
numerical solutions of these equations by Monte Carlo methods.

We study long-time behaviour of the discrete-time semigroup $\{P^n\}_{n\in \mathbb N}$ and   the semigroup $\{P(t)\}_{t\ge0}$.
We are specially interested in asymptotic stability  and  
sweeping   \cite{LiM}. We recall that a stochastic   semigroup is sweeping from a set $A$ 
if 
\[
\lim_{t\to\infty}\int_A P(t)f\,d\mu =0
\]
 for each density $f$.  
We prove that  both semigroups satisfy  the Foguel alternative, i.e. 
they are asymptotically stable or sweeping from compact sets.
This result is based on
a decomposition theorem of a stochastic semigroup into asymptotically stable and
sweeping components \cite{PR-JMMA2016} (see also \cite{PR-SD2017} for substochastic semigroups).   
We  give some sufficient conditions for asymptotic stability and sweeping of the continuous-time stochastic semigroup. 
We also present an example such that  the operator $P$ is  asymptotically stable but the semigroup $\{P(t)\}_{t\ge0}$ is sweeping
from compact sets and explain this unexpected phenomenon.
It should be noted that stochastic semigroups are widely
applied to study asymptotic properties of biological models
(see \cite{Rudnicki-LN,RT-K-k} and references cited therein).

The organization of the paper is as follows. 
Section~\ref{s:ap} contains the definitions and results concerning 
asymptotic stability,  sweeping and  the Foguel
alternative for stochastic semigroups.
Biological and mathematical description of the cell cycle is presented in Section~\ref{s: biol-to-math}. 
In Section~\ref{s: d-t-model}  we investigate the discrete-time model and we prove 
that the stochastic operator $P$ related to this model satisfies the  
Foguel alternative (Theorem~\ref{AF-dccm}).  We also recall some sufficient conditions for asymptotic properties of $P$.
In Section~\ref{s: c-t-model} we introduce a continuous-time model as a PDMP
and we  show that the stochastic semigroup $\{P(t)\}_{t\ge 0}$ corresponding to this process 
satisfies the Foguel alternative.  In  Section~\ref{s: master} 
we show the relations between 
discrete-time and continuous-time models, which allow us to formulate some conditions for  asymptotic stability and sweeping of $\{P(t)\}_{t\ge 0}$. 
Finally, we compare asymptotic properties of both  models.

\section{Asymptotic properties of stochastic operators and semigroups}
\label{s:ap}
 Let a triple $(X,\Sigma,\mu)$ be a $\sigma$-finite measure space.
Denote by $D$ the subset of the space
$L^1=L^1(X,\Sigma,\mu)$
which contains all
densities
\[
D=\{f\in \, L^1\colon  \,f\ge 0,\,\, \|f\|=1\}.
\]
A linear operator $P\colon L^1\to L^1$
is called \textit{stochastic} 
if   $P(D)\subseteq  D$.
A family
$\{P(t)\}_{t\ge 0}$ of linear operators on $L^1$  is called a
\textit{stochastic  semigroup}  if it is a strongly
continuous semigroup and  all operators $P(t)$ are stochastic.
Now, we introduce some notions which characterize the
asymptotic behaviour of
iterates of stochastic operators 
$P^n$, $n=0,1,2,\dots$,
and stochastic semigroups 
$\{P(t)\}_{t\ge0}$.
The iterates of stochastic  operators 
form a discrete-time  
semigroup and we can use notation $P(t)=P^t$ for their  powers 
and we formulate most of definitions and results for both types of semigroups
without distinguishing them.
A stochastic semigroup $\{P(t)\}_{t\ge 0}$ is
{\it asymptotically stable} if  there exists a density $f^*$   such that
\begin{equation}
\label{d:as}
\lim _{t\to\infty}\|P(t)f-f^*\|=0 \quad \text{for}\quad f\in D.
\end{equation}
 From (\ref{d:as}) it follows immediately that  $f^*$ is {\it invariant\,} with respect to
$\{P(t)\}_{t\ge 0}$, i.e.  $P(t)f^*=f^*$ for
each $t\ge 0$.
A stochastic semigroup $\{P(t)\}_{t\ge 0}$ is 
called \textit{sweeping}
with respect to a set $B\in\Sigma$ if for every
 $f\in D$
\begin{equation*}
\lim_{t\to\infty}\int_B P(t)f(x)\,\mu(dx)=0.
\end{equation*}

Our aim is to find such conditions that  a stochastic semigroup $\{P(t)\}_{t\ge 0}$ 
is asymptotically stable or sweeping from all compact sets 
called the {\it Foguel alternative} \cite{LiM}.
We also want to find simple sufficient conditions for asymptotic stability and sweeping 
 for  operators and semigroups related to cell cycle models.

We assume additionally that  $X$ is a separable metric space and   $\Sigma=\mathcal B(X)$ is the $\sigma$-algebra of Borel subsets of $X$.
We  will consider a stochastic  semigroup $\{P(t)\}_{t\ge0}$
such that for each $t\ge 0$ we have
\begin{equation}
\label{w-eta0}
P(t)f(x)\ge \int_X q(t,x,y)f(y)\, \mu(dy)\quad \textrm{for $f\in D$},
\end{equation}
where $q(t,\cdot,\cdot)\colon X\times X\to [0,\infty)$ is a  measurable function
and the following condition holds:

\noindent (K) for every $y_0\in X$ there exist  an $\varepsilon >0$,  a $t>0$,
and a measurable function
$\eta\ge 0$ such that $\int \eta(x)\, \mu(dx)>0$ and
\begin{equation}
\label{w-eta2}
q(t,x,y)\ge \eta(x)\mathbf 1_{B(y_0,\varepsilon)}(y)\quad \textrm{for $x\in X$},
\end{equation}
where
$B(y_0,\varepsilon)=\{y\in X:\,\,\rho(y,y_0)<\varepsilon\}$.

\noindent We define condition (K) for a stochastic operator $P$ in the same way remembering the notation $P(t)=P^t$.
Condition (K) is satisfied if, for example, for every point $y\in X$ there exist
a $t>0$ and an $x\in X$ such that
the kernel   $q(t,\cdot,\cdot)$ is continuous in a neighbourhood of $(x,y)$  and
$q(t,x,y)>0$.

Now, we formulate  the Foguel alternative for some class of stochastic semigroups.
We need an auxiliary definition.
We say that a stochastic 
semigroup $\{P(t)\}_{t\ge 0}$
\textit{overlaps supports}
 if 
for every $f,g\in D\,$ there exists $t>0$ such that 
\[
\mu(\mathrm{supp}\, P(t)f\cap 
\mathrm{supp}\, P(t)g)>0.
\]
The
\textit{support}
of any measurable function $f$ is defined up to a set of measure zero by the
formula 
\[
\supp f = \{x\in X : f(x)\ne 0\}.
\]

\begin{proposition}
\label{prop:FA-SO}
Assume that $\{P(t)\}_{t\ge 0}$  satisfies {\rm (K)} and overlaps supports. Then
$\{P(t)\}_{t\ge 0}$ is sweeping or $\{P(t)\}_{t\ge 0}$  has an invariant density $f^*$ with a support $A$  and there exists
a positive linear functional $\alpha$  defined on $L^1(X,\Sigma,\mu)$
 such that 
\begin{itemize} 
 \item[(i)]  for every  $f\in L^1(X,\Sigma,\mu)$ we have
\begin{equation} 
\label{th2:ap2}
 \lim_{t\to \infty}\|\mathbf 1_{A} P(t)f-\alpha(f)f^*\|=0,
\end{equation}
\item[(ii)] if $Y=X\setminus A$, then for every  $f\in L^1(X,\Sigma,\mu)$ and for every compact set $F$ we have
\begin{equation} 
\label{th2:sw2}
\lim_{t\to \infty} \int\limits_{F\cap Y} P(t)f(x)\, \mu(dx)=0. 
 \end{equation}
 \end{itemize}
In particular, if $\{P(t)\}_{t\ge 0}$  has an invariant density $f^*$ with the support $A$ and  $X\setminus A$ is a subset of a compact set, 
then $\{P(t)\}_{t\ge 0}$ is asymptotically stable. 
\end{proposition}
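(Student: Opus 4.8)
The plan is to derive the Foguel alternative of Proposition~\ref{prop:FA-SO} from a general decomposition theorem for stochastic semigroups satisfying condition~(K). The decomposition theorem (from \cite{PR-JMMA2016}) asserts that under (K) the state space splits, up to a $\mu$-null set, into a countable (possibly empty, possibly finite) family of ergodic pieces carrying invariant densities together with a ``sweeping residue'': more precisely, there is a measurable set $A$ such that $\mathbf 1_A P(t)f$ converges in $L^1$ for every $f\in L^1$ to a limit of the form $\alpha(f)f^*$ with $f^*$ an invariant density supported on $A$ and $\alpha$ a positive linear functional, while $P(t)$ is sweeping from every compact subset of $Y=X\setminus A$. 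First I would invoke this theorem to obtain, for the semigroup at hand, a decomposition into at most countably many such pieces $A_1,A_2,\dots$ (with associated invariant densities $f_1^*,f_2^*,\dots$ and functionals $\alpha_1,\alpha_2,\dots$) plus a sweeping part; if there are no pieces at all, the semigroup is sweeping from compact sets and we are in the first alternative, so we may assume at least one piece exists.

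The second step, and the place where the hypothesis \emph{overlaps supports} does its work, is to show that there is at most one ergodic piece. Suppose $A_1$ and $A_2$ are two distinct pieces with invariant densities $f_1^*$ and $f_2^*$. By the $L^1$-convergence on each piece, $\mathbf 1_{A_1}P(t)f_1^*\to f_1^*$ and $\mathbf 1_{A_2}P(t)f_2^*\to f_2^*$, so for large $t$ the support of $P(t)f_1^*$ is essentially contained in (a neighbourhood, in the $L^1$ sense, of) $A_1$ and that of $P(t)f_2^*$ in $A_2$; since the pieces are disjoint up to measure zero, $\mu(\supp P(t)f_1^*\cap\supp P(t)f_2^*)=0$ for all large $t$. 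This contradicts the overlapping-supports assumption applied to $f=f_1^*$, $g=f_2^*$. Hence there is exactly one piece $A=A_1$, one invariant density $f^*=f_1^*$, and one functional $\alpha=\alpha_1$, and the conclusions (i) and (ii) are precisely what the decomposition theorem yields for this single piece. I expect this uniqueness argument — carefully extracting ``disjoint supports for large $t$'' from $L^1$-convergence, and reconciling it with the fact that overlapping supports is only required for \emph{some} $t$, not all — to be the main obstacle; it hinges on the fact that once $\mathbf 1_{A_i}P(t)f_i^*$ is close to $f_i^*$ in $L^1$, the portion of $P(t)f_i^*$ living outside $A_i$ has small mass, and the sweeping of that residual mass from compacts (conclusion (ii)) prevents it from ever creating genuine overlap of positive measure with the other piece.

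Finally, for the last sentence of the statement, assume $\{P(t)\}_{t\ge0}$ has an invariant density $f^*$ with support $A$ and that $Y=X\setminus A$ is contained in a compact set $F_0$. Since an invariant density exists, the semigroup is not sweeping, so by the alternative just established we are in the second case, and conclusions (i)–(ii) hold with this $A$. Take any $f\in D$. By (ii) with $F=F_0$, $\int_{Y}P(t)f\,d\mu=\int_{F_0\cap Y}P(t)f\,d\mu\to0$, i.e. the mass of $P(t)f$ escaping $A$ tends to $0$; equivalently $\|\mathbf 1_A P(t)f\|\to 1$. Combined with (i), namely $\|\mathbf 1_A P(t)f-\alpha(f)f^*\|\to 0$, we get $\alpha(f)=\lim_t\|\mathbf 1_A P(t)f\|=1$, and therefore
\[
\|P(t)f-f^*\|\le \|\mathbf 1_A P(t)f-\alpha(f)f^*\|+\|\mathbf 1_Y P(t)f\| \longrightarrow 0.
\]
Since $f\in D$ was arbitrary, this is exactly asymptotic stability, completing the proof.
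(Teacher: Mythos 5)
Your overall strategy --- invoke the asymptotic decomposition theorem of \cite{PR-JMMA2016} and then use the overlapping-supports hypothesis to show that at most one ergodic piece survives --- is exactly the route the paper takes. But the step you yourself flag as ``the main obstacle'' is where your argument genuinely fails, and the repair is not the one you sketch. You try to deduce ``essentially disjoint supports for large $t$'' from the convergence $\mathbf 1_{A_i}P(t)f_i^*\to f_i^*$, arguing that the residual mass of $P(t)f_i^*$ outside $A_i$ is small and is swept from compacts. This does not work: the overlapping-supports condition concerns the \emph{measure of the set} on which $P(t)f$ and $P(t)g$ are simultaneously nonzero, and an arbitrarily small amount of mass can occupy a support of arbitrarily large measure; moreover conclusion (ii) only controls mass on compact subsets of $Y=X\setminus\bigcup_j A_j$, so it says nothing about residual mass sitting inside $A_2$. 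The correct observation is both stronger and simpler: the densities $f_j^*$ produced by the decomposition are genuinely invariant, so $P(t)f_j^*=f_j^*$ and hence $\supp P(t)f_1^*\cap\supp P(t)f_2^*=A_1\cap A_2=\emptyset$ for \emph{every} $t$, contradicting overlapping supports outright; equivalently, the paper uses the absorbing property $P^*\mathbf 1_{A_j}\ge\mathbf 1_{A_j}$, which forces every density supported in $A_j$ to have all its images supported in $A_j$. Your worry about ``some $t$ versus all $t$'' dissolves once invariance is actually used.

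The second, and more substantive, omission is that you have only proved the continuous-time half of the statement. The proposition is formulated with the convention $P(t)=P^t$ and is applied in Theorem~\ref{AF-dccm} to the discrete operator $P$. For iterates of a stochastic operator the decomposition theorem (Theorem~\ref{th:1}) does \emph{not} deliver an invariant density on each piece $A_j$; it delivers a \emph{periodic} operator $S_j$ with periodic densities $h_1,\dots,h_k$ satisfying $\supp P^nh_{i_1}\cap\supp P^nh_{i_2}=\emptyset$ for all $n$. One must invoke the overlapping-supports hypothesis a second time to rule out $k\ge 2$ and conclude that $S_j$ is a rank-one projection $f\mapsto\alpha(f)f^*$; only then does (\ref{th:ap}) reduce to (\ref{th2:ap2}). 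This idea is absent from your proposal. Your final paragraph, deducing asymptotic stability when $X\setminus A$ is contained in a compact set (via $\alpha(f)=1$ for densities), is correct and agrees with the paper.
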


The proof of Proposition~\ref{prop:FA-SO} is based on   theorems on asymptotic decomposition of stochastic operators \cite[Theorem 1]{PR-JMMA2016} and  stochastic semigroups 
  \cite[Theorem 2]{PR-JMMA2016}.
 \begin{theorem}
 \label{th:1}
Assume that $P$ satisfies {\rm (K)}.
Then there exist an at most countable set $J$, a family of disjoint measurable sets $\{A_j\}_{j\in J}$
such that $P^*\mathbf 1_{A_j}\ge\mathbf 1_{A_j}$ for $j\in J$,  a family 
$\{S_j\}_{j\in J}$ of 
 periodic stochastic operators on $L^1(A_j,\Sigma_{A_j},\mu)$ with $\Sigma_{A_j}=\{A\in \Sigma\colon A\subseteq A_j\}$ for $j\in J$,
and a family $\{R_j\}_{j\in J}$  of positive projections  $R_j\colon  L^1(X,\Sigma,\mu)\to L^1(A_j,\Sigma_{A_j},\mu)$
 such that 
\begin{itemize} 
 \item[(i)] for every $j\in J$ and for every  $f\in L^1(X,\Sigma,\mu)$ we have
\begin{equation} 
\label{th:ap}
 \lim_{n\to \infty}\|\mathbf 1_{A_j} P^n\!f-S^n_jR_jf\|=0,
\end{equation}
\item[(ii)] if $Y=X\setminus \bigcup\limits _{j\in J} A_j$, then  for every  $f\in L^1(X,\Sigma,\mu)$ and for every compact set $F$ we have
\begin{equation} 
\label{th:sw}
\lim_{n\to \infty} \int\limits_{F\cap Y} P^n\!f(x)\, \mu(dx)=0. 
 \end{equation}
 \end{itemize}
\end{theorem}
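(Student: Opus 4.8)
The plan is to obtain the decomposition from the lattice of $P$-closed sets, refined by the local minorization contained in (K). Call $A\in\Sigma$ \emph{$P$-closed} if $P^*\mathbf 1_A\ge\mathbf 1_A$; using $P^*\mathbf 1=\mathbf 1$ one checks this is equivalent to: every density supported in $A$ has all its iterates $P^nf$ supported in $A$. Such sets are closed under countable unions and intersections, and for $f\ge0$ the numbers $\|\mathbf 1_AP^nf\|$ are non-decreasing in $n$, so a well-defined amount of mass is trapped by $A$. First I would single out the $P$-closed sets that carry an invariant density and, by a Kuratowski--Zorn / exhaustion argument, extract a maximal family $\{A_j\}_{j\in J}$ of pairwise essentially disjoint \emph{minimal} ones; then $P^*\mathbf 1_{A_j}\ge\mathbf 1_{A_j}$ holds by construction, and since the $A_j$ are disjoint of positive measure in a $\sigma$-finite space, $J$ is at most countable.

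Next I would analyse $P$ on one component. Closedness makes $P$ induce a stochastic operator $P_j$ on $L^1(A_j,\Sigma_{A_j},\mu)$. Condition (K) applied at a point $y_0\in A_j$ supplies $t>0$, $\varepsilon>0$ and $\eta\ge0$ with $q(t,x,y)\ge\eta(x)\mathbf 1_{B(y_0,\varepsilon)}(y)$, i.e.\ a small set for $P_j^{\,t}$ with minorizing density $\eta$; together with the minimality (hence irreducibility) of $A_j$ this places $P_j$ in the Harris class, so it has a unique invariant density, strictly positive on $A_j$. The Komornik--Lasota spectral/constrictivity theorem then furnishes a cyclic decomposition of $A_j$ into finitely or countably many sets permuted by $P_j$, a periodic stochastic operator $S_j$ on $L^1(A_j,\Sigma_{A_j},\mu)$ encoding this permutation, and the convergence $\|\mathbf 1_{A_j}P^nf-S_j^nR_jf\|\to0$ for $f$ supported in $A_j$, where $R_j$ projects onto the cyclic subspace. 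For general $f\in L^1(X)$ one extends $R_j$ using the trapped mass $\lim_n\|\mathbf 1_{A_j}P^nf\|$: the part of the orbit absorbed by $A_j$ eventually moves like $S_j$, its limit object is $R_jf$, and positivity and idempotency of $R_j$ are then formal. This establishes (i).

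The main obstacle is part (ii): for every $f\in L^1(X)$ and every compact $F$, $\int_{F\cap Y}P^nf\,d\mu\to0$, where $Y=X\setminus\bigcup_jA_j$. I would argue by contradiction from the key lemma that (K) is designed to produce: \emph{if $\{P^n\}$ fails to be sweeping from a set $F$, then $P$ has an invariant density whose support meets $F$}. Indeed, if some compact $F\subseteq Y$ kept a fixed amount of mass along a subsequence of times, cover $F$ by finitely many balls on which (K) gives a uniform minorization; then some ball receives a definite amount of mass infinitely often, so a fixed positive function $c\eta$ reappears in the orbit infinitely often, which anchors mass and yields, via weak limits of Cesàro averages, a nonzero invariant $h\ge0$ with $\supp h\cap F\ne\emptyset$. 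Decomposing $h$ into ergodic invariant densities, one of these would have support meeting $Y$ and would therefore be the invariant density of a minimal $P$-closed set missing from $\{A_j\}_{j\in J}$ --- contradicting maximality. Making this precise is the technical heart, because (K) gives only a \emph{local} minorization (one $t$, one $\varepsilon$ per point), so compactness of $F$ and a diagonal/tightness argument are needed to manufacture a single global invariant object, and one must also ensure that the supports of ergodic invariant densities really are the minimal closed sets, so that ``a new invariant density'' genuinely means ``a new $A_j$''. Once (ii) is in hand, (i) and (ii) give precisely the asserted decomposition with the stated $J$, $A_j$, $S_j$, $R_j$.
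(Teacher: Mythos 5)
First, a point of reference: the paper does not prove Theorem~\ref{th:1} at all --- it is quoted verbatim from \cite{PR-JMMA2016} (``Asymptotic decomposition of substochastic operators and semigroups'') and used as a black box in the proof of Proposition~\ref{prop:FA-SO}. So you are not reproducing an in-paper argument but reconstructing a substantial external theorem. Your architecture (the lattice of sets with $P^*\mathbf 1_A\ge\mathbf 1_A$, minimal components carrying invariant densities, asymptotic periodicity on each component, sweeping off their union) is the right shape and matches the spirit of the cited work; the elementary observations you make --- monotonicity of $\|\mathbf 1_AP^nf\|$, countability of $J$, and the fact that an invariant density with mass in $Y$ restricts to an invariant density supported in $Y$ --- are all correct. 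But as a proof the proposal has genuine gaps exactly where you locate ``the technical heart,'' and they are not routine.

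The main gap is the key lemma behind (ii). From ``a ball $B(y_0,\varepsilon)$ receives mass $\ge\epsilon/N$ infinitely often'' and the minorization $q(t,x,y)\ge\eta(x)\mathbf 1_{B(y_0,\varepsilon)}(y)$ you get $P^{n_k+t}f\ge c\,\eta$ along a subsequence, but the step from a \emph{recurrent} lower bound to a nonzero invariant density is unproved and is where all the work lies. Weak limits of Ces\`aro averages need not exist in $L^1$ (no weak sequential compactness without uniform integrability); passing to weak-$*$ limits of measures produces an invariant \emph{measure} that may well be singular with respect to $\mu$, and the whole point of condition (K) in \cite{PR-JMMA2016} is to rule out a singular conservative part via Foguel--Harris theory --- this cannot be dispatched by ``weak limits of Ces\`aro averages.'' (The classical lower-bound-function theorem requires the bound to hold eventually for \emph{all} $n$ and for \emph{all} densities, which you do not have.) A secondary but real problem is circularity in the construction of $R_j$ for $f$ not supported in $A_j$: you define $R_jf$ as ``the limit object'' of the part of the orbit absorbed by $A_j$, but the existence of that limit \emph{is} assertion (i); one must instead define $R_j$ independently (as in (\ref{per-op3}), via the functionals $\alpha_i(f)=\lim_n\int_{B_i^{(n)}}P^nf\,d\mu$ over the cyclically permuted supports) and then prove convergence. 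Finally, invoking Harris/Komornik--Lasota theory on a component requires verifying that the restricted operator is Harris from the purely local minorization in (K) (one $t$ and one $\varepsilon$ per point, with $t$ varying), which again needs the covering and uniformization argument you only gesture at. In short: correct skeleton, but the decisive steps --- existence of an absolutely continuous invariant part from non-sweeping, and the intrinsic construction of $R_j$ --- are asserted rather than proved.
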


We recall that a  stochastic operator $S$ is called \textit{periodic}
 if there exists 
 a sequence of densities $h_1,\dots,h_k$ such that
\begin{equation}
\label{per-op1}
h_ih_j=0\textrm{ for $i\ne j$ and $h_1+\dots+h_k>0$ a.e.,} 
\end{equation} 
\begin{equation}
\label{per-op2}
Sh_i=h_{i+1} \textrm{ for $i\le k-1$ and $Sh_k=h_1$} 
\end{equation} 
and for every integrable function  $f$ we have $Sf= SQf$, where
\begin{equation}
\label{per-op3}
 \textrm{$Qf=\sum_{i=1}^k \alpha_i(f) h_i$,  
$\,\alpha_i(f)=\int_{B_i} f(x)\,\mu(dx)$,   $\,B_i=\supp h_i$}.
\end{equation} 
The operator $P$ can be restricted to the space $L^1(A_j,\Sigma_{A_j},\mu)$, i.e.  
if $\supp f\subseteq A_j$ then $\supp Pf\subseteq A_j$. The operators
 $S_j$  have the property $S_jh_i=Ph_i$ (see the proof of Lemma 9 
\cite{PR-JMMA2016}), which means that the functions $h_i$ are periodic densities  of 
$P$ such that $\supp P^nh_{i_1}\cap \supp P^nh_{i_2}=\emptyset$  for each $n$ and $i_1\ne i_2$.
The space $L^1(A_j,\Sigma_{A_j},\mu)$ can be canonically embedded in the space 
$L^1(X,\Sigma,\mu)$ and, therefore,  $R_j$ can be treated as the transformation from  $L^1(X,\Sigma,\mu)$ to itself.
In the statement of Theorem~\ref{th:1} we use the following definition of a projection. 
A linear transformation $T$ from a vector space to itself is  a \textit{projection} if  $T^2 = T$.

\begin{theorem}
 \label{th:2}
Let $\{P(t)\}_{t\ge0}$ be a stochastic semigroup which satisfies {\rm (K)}.
Then there exist an at most countable set $J$, a family of invariant densities
$\{f^*_j\}_{j\in J}$ with disjoint supports $\{A_j\}_{j\in J}$,  and a family
$\{\alpha_j\}_{j\in J}$
of positive linear functionals  defined on $L^1$
 such that
\begin{itemize}
 \item[(i)] for every $j\in J$ and for every  $f\in L^1$ we have
\begin{equation}
\label{pomoc:ogolny}
 \lim_{t\to \infty}\|\mathbf 1_{A_j} P(t)f-\alpha_j(f)f^*_j\|=0,
\end{equation}
\item[(ii)] if $Y=X\setminus \bigcup\limits _{j\in J} A_j$, then for every  $f\in L^1$ and for every compact set $F$ we have
\begin{equation}
\label{th2:sw}
\lim_{t\to \infty} \int\limits_{F\cap Y} P(t)f(x)\, \mu(dx)=0.
 \end{equation}
 \end{itemize}
\end{theorem}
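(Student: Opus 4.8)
The plan is to reduce the statement to the discrete-time decomposition of Theorem~\ref{th:1} applied to a single operator $P(t_0)$ of the semigroup, and then to use strong continuity of $\{P(t)\}_{t\ge0}$ to eliminate periodicity. One first checks that for a suitable reference time $t_0>0$ the operator $P(t_0)$ inherits condition (K): condition (K) for the semigroup gives, for each $y_0$, numbers $\varepsilon,t>0$ and $\eta\ge0$ with $q(t,x,y)\ge\eta(x)\mathbf 1_{B(y_0,\varepsilon)}(y)$ and $\int\eta\,d\mu>0$, and, writing $P(nt_0)=P(nt_0-t)P(t)$ and using (\ref{w-eta0}), one propagates this lower bound to an integer multiple of $t_0$. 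Applying Theorem~\ref{th:1} to $P(t_0)$ then yields an at most countable set $J$, disjoint sets $\{A_j\}_{j\in J}$ with $P^*(t_0)\mathbf 1_{A_j}\ge\mathbf 1_{A_j}$, periodic stochastic operators $S_j$ of periods $k_j$ on $L^1(A_j,\Sigma_{A_j},\mu)$ with periodic densities $h^j_1,\dots,h^j_{k_j}$ (disjoint supports, union $A_j$) and positive projections $R_j$, such that $\|\mathbf 1_{A_j}P(nt_0)f-S^n_jR_jf\|\to0$ and, with $Y=X\setminus\bigcup_jA_j$, $\int_{F\cap Y}P(nt_0)f\,d\mu\to0$ as $n\to\infty$ for every compact $F$ and every $f\in L^1$.

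The heart of the argument is to show $k_j=1$ for every $j$. Here strong continuity is decisive: since $s\mapsto P(s)h$ is continuous, hence uniformly continuous on compact intervals, the semigroup cannot in small time transport a density onto one with disjoint support. If $k_j\ge2$, then because $P(t_0)h^j_1=h^j_2$ with $\|h^j_1-h^j_2\|=2$, for $m$ large the chain $h^j_1=P(0)h^j_1,P(t_0/m)h^j_1,\dots,P(t_0)h^j_1=h^j_2$ has consecutive increments of norm $<\tfrac14$, so a discrete intermediate-value argument yields $w=P(j_0t_0/m)h^j_1$ with $\int_{\supp h^j_1}w\,d\mu\in(\tfrac12,\tfrac34]$. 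Since $P(t_0)^{k_j}h^j_1=h^j_1$, the density $w$ is fixed by $P(k_jt_0)$, and passing to the limit in $\|\mathbf 1_{A_j}P(nk_jt_0)w-S^{nk_j}_jR_jw\|\to0$ forces $\mathbf 1_{A_j}w=\sum_i\beta_ih^j_i$ with $\beta_i\ge0$ and $\beta_1\in(\tfrac12,\tfrac34]$, whereas $\mathbf 1_{A_j}P(t_0)w=\sum_i\beta_ih^j_{i+1}$; combining this "mixed" mass profile with the continuity of $s\mapsto P(s)h^j_1$ contradicts the $\{0,\text{positive constant}\}$ dichotomy that the cyclic structure of $S_j$ imposes at integer times. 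Hence $k_j=1$, $S_j$ is the one-dimensional projection onto a density $f^*_j:=h^j_1$ with $\supp f^*_j=A_j$; moreover $f^*_j$ is $P(t_0)$-invariant, so $P(t)f^*_j$ is again a $P(t_0)$-invariant density supported in $A_j$ and therefore equals $f^*_j$, which shows that $f^*_j$ is invariant for the whole semigroup and each $A_j$, as well as $Y$, is invariant under every $P(t)$.

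Finally one passes from multiples of $t_0$ to all $t\ge0$. Set $\alpha_j(f)=\lim_n\int_{A_j}P(nt_0)f\,d\mu$, a positive linear functional, so $\mathbf 1_{A_j}P(nt_0)f\to\alpha_j(f)f^*_j$. For $t=nt_0+s$ with $s\in[0,t_0)$ one splits $P(nt_0)f$ into its parts on $A_j$, on $\bigcup_{i\ne j}A_i$, and on $Y$; the last two are mapped by $P(s)$ into sets disjoint from $A_j$ (invariance), while $\|\mathbf 1_{A_j}P(s)(\mathbf 1_{A_j}P(nt_0)f-\alpha_j(f)f^*_j)\|\le\|\mathbf 1_{A_j}P(nt_0)f-\alpha_j(f)f^*_j\|\to0$ uniformly in $s\in[0,t_0]$, which gives (\ref{pomoc:ogolny}). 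Statement (\ref{th2:sw}) follows in the same way: it holds along $nt_0$ by Theorem~\ref{th:1}, and for $t=nt_0+s$ one uses that $P(s)$ keeps the part of $P(nt_0)f$ on $\bigcup_jA_j$ disjoint from $F\cap Y$ and controls its part on $Y$ by a tightness estimate uniform in $s\in[0,t_0]$.

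The step I expect to be the main obstacle is turning the rigid \emph{discrete} periodicity delivered by Theorem~\ref{th:1} into the \emph{trivial} periodicity a strongly continuous stochastic semigroup satisfying (K) must possess, and in the process upgrading the $A_j$ from $P(t_0)$-almost-invariant to invariant under every $P(t)$; the inheritance of (K) by $P(t_0)$ is a secondary, more computational point. Once these are in place, the remaining assertions follow routinely from strong continuity and the semigroup law.
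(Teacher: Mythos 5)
The paper does not actually prove this theorem: it is quoted from \cite{PR-JMMA2016} (Theorem 2 there), so your proposal is a reconstruction of a cited result rather than something I can match against a proof in the text. Your overall route --- apply Theorem~\ref{th:1} to a single operator $P(t_0)$, check that it inherits (K), and use strong continuity to force all periods $k_j$ to equal $1$ --- is the natural one, and the (K)-inheritance step via $P(nt_0)=P(nt_0-t)P(t)$ together with positivity is fine.

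The gap is in the central step, the elimination of $k_j\ge 2$. You correctly produce, via uniform continuity and a discrete intermediate-value argument, a density $w=P(s_0)h^j_1$ with $s_0\in(0,t_0)$ that is fixed by $P(k_jt_0)$ and satisfies $\mathbf 1_{A_j}w=\sum_i\beta_ih^j_i$ with $\beta_1\in(1/2,3/4]$. But the asserted contradiction with the ``$\{0,\text{positive constant}\}$ dichotomy at integer times'' does not exist: for \emph{every} $s$ the density $P(s)h^j_1$ is fixed by $P(k_jt_0)$, hence $\mathbf 1_{A_j}P(s)h^j_1=\sum_i\beta_i(s)h^j_i$ with $\beta(s)$ a continuous path in the simplex joining $e_1$ (at $s=0$) to $e_2$ (at $s=t_0$); a mixed profile at the non-integer time $s_0$ violates nothing at integer times, so the argument does not close. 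The step can be repaired along the same lines: from $\mathbf 1_{B_1}P(s)h^j_1=\beta_1(s)h^j_1$ and strong continuity choose $s=t_0/m$ with $\|P(s)h^j_1-h^j_1\|<1$, so that $\beta_1(s)>0$ and hence $P(s)h^j_1\ge\beta_1(s)h^j_1$; iterating this inequality $m$ times and using positivity gives $h^j_2=P(t_0)h^j_1\ge\beta_1(s)^mh^j_1$, which contradicts $\supp h^j_1\cap\supp h^j_2=\emptyset$ when $k_j\ge2$. A similar explicit argument is also missing where you assert that each $A_j$ (and $Y$) is invariant under every $P(t)$ --- a priori $P(t)f^*_j$ is only known to be a $P(t_0)$-invariant density, not one supported in $A_j$ --- and this invariance is exactly what your final interpolation from $t=nt_0$ to general $t$ rests on, so it too needs to be proved rather than stated.
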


In particular, we have 
\begin{corollary}
\label{col-sw}
Assume that a continuous-time stochastic semigroup $\{P(t)\}_{t\ge 0}$ satisfies 
condition (K) and has no invariant densities.
Then $\{P(t)\}_{t\ge 0}$ is sweeping from compact sets.
\end{corollary}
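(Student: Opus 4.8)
The plan is to obtain this as an immediate consequence of Theorem~\ref{th:2}. Since $\{P(t)\}_{t\ge 0}$ satisfies condition (K), Theorem~\ref{th:2} applies and yields an at most countable index set $J$, a family of invariant densities $\{f^*_j\}_{j\in J}$ with pairwise disjoint supports $\{A_j\}_{j\in J}$, and a family of positive linear functionals $\{\alpha_j\}_{j\in J}$ on $L^1$ such that statements (i) and (ii) of that theorem hold.

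The key step is to observe that the hypothesis that $\{P(t)\}_{t\ge 0}$ has no invariant densities forces $J=\emptyset$. Indeed, if $J$ were nonempty, then for any $j\in J$ the function $f^*_j$ would be, by the very statement of Theorem~\ref{th:2}, an invariant density of the semigroup, contradicting the assumption. Hence $J=\emptyset$, and therefore $Y=X\setminus\bigcup_{j\in J}A_j=X$.

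It then remains only to read off part (ii) of Theorem~\ref{th:2} with $Y=X$: for every $f\in L^1$ and every compact set $F$ we have
\[
\lim_{t\to\infty}\int_{F} P(t)f(x)\,\mu(dx)=0.
\]
Specializing to $f\in D$ gives exactly the definition of sweeping from compact sets, which proves the corollary. There is no genuine obstacle here; the only point needing (trivial) care is that the densities furnished by Theorem~\ref{th:2} are invariant in precisely the sense used in the corollary's hypothesis, which is guaranteed by the statement of that theorem. In effect the corollary is just the degenerate case $J=\emptyset$ of Theorem~\ref{th:2}.
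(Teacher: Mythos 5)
Your proof is correct and is precisely the argument the paper intends: the corollary is stated with ``In particular, we have'' immediately after Theorem~\ref{th:2}, i.e.\ as the degenerate case $J=\emptyset$ of that theorem, which is exactly your reading. No issues.
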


 \begin{proof}[Proof of Proposition~\ref{prop:FA-SO}] 
First, we consider the case of a stochastic operator. 
If  $P$ satisfies conditions (K) and overlaps supports, then  $J$ is an empty set or a singleton.
Indeed, if  $\supp  f \subseteq A_j$ then $\supp  Pf \subseteq A_j$, because  $P^*\mathbf 1_{A_j}\ge\mathbf 1_{A_j}$.
If $J$ has at least two elements, then   $\supp  P^nf\cap \supp  P^ng\subseteq A_1\cap A_2= \emptyset $
 for  $n\in\mathbb N$ and $f,g\in D$  such that  $\supp  f \subseteq A_1$ and $\supp  g \subseteq A_2$, which contradicts the assumption that 
 $P$ overlaps supports. If $J$ is a singleton, then the periodic operator  $S$ is in fact a projection on a one dimensional space because 
the overlaping property of $P$ excludes  the existence of two periodic densities
$h_1$ and $h_2$ such that $\supp P^nh_1\cap \supp P^nh_2=\emptyset$  for each $n$. Thus condition 
(\ref{th:ap}) takes the form  (\ref{th2:ap2}). If   $P$  has an invariant density $f^*$ with the support $A$ and $Y=X\setminus A$ is subset of a compact set, then 
from condition (\ref{th2:sw2}) it follows that
$\lim_{n\to \infty} \int_{Y} P^nf(x)\, \mu(dx)=0$. Since $P$ is a stochastic operator,  we have $\alpha (f)=1$ for any density $f$, and consequently,
 $P$ is asymptotically stable. 
 The proof for continuous-time stochastic semigroups is straightforward because we have at most one invariant density
and  conditions (\ref{th2:ap2}), (\ref{pomoc:ogolny})  coincide.
 \end{proof}

 If a continuous-time stochastic semigroup $\{P(t)\}_{t\ge 0}$  has a unique invariant density $f^*$
 and $f^*>0$, then according to Theorem~\ref{th:1} condition (K)  implies asymptotic stability of $\{P(t)\}_{t\ge 0}$.
We can strengthen considerably this conclusion replacing condition (K) by the following one.
 A substochastic semigroup $\{P(t)\}_{t\ge 0}$
is called {\it partially integral} if 
(\ref{w-eta0}) holds and 
\begin{equation*}
\int_X\int_X  q(t,x,y)\,\mu(dx)\,\mu(dy)>0
\end{equation*}
for some $t>0$.
\begin{theorem}[\cite{PR-jmaa2}]
\label{asym-th2}
Let $\{P(t)\}_{t\ge 0}$ be a continuous-time partially integral stochastic
semigroup. Assume that the  semigroup $\{P(t)\}_{t\ge 0}$ has
a unique invariant density $f^*$. If $f^*>0$ a.e., then the semigroup
$\{P(t)\}_{t\ge 0}$ is asymptotically stable.
\end{theorem}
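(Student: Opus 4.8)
The plan is to deduce asymptotic stability from the fact that the non‑increasing quantity $r(t)=\frac12\|P(t)f-f^*\|$ must tend to $0$, combining two ingredients: weak compactness of the orbits (coming from the order bound that $f^*>0$ provides) and the smoothing effect of the integral part of $P(t_0)$.

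First I would carry out the routine reductions. Since $f^*>0$ a.e., the densities $f$ with $f\le cf^*$ for some $c\ge1$ are dense in $D$ (take $\min(f,cf^*)$ and renormalise), and the operators $P(t)$ are $L^1$‑contractions, so it suffices to prove $\|P(t)f-f^*\|\to0$ for such $f$. For these, $\{g\in D:g\le cf^*\}$ is $P(t)$‑invariant (from $cf^*-g\ge0$ we get $cf^*-P(t)g=P(t)(cf^*-g)\ge0$), and using $P(t)f^*=f^*$ together with $\int(P(t)h)^-\,d\mu\le\int h^-\,d\mu$ one sees that $r(t):=\int(f^*-P(t)f)^+\,d\mu=\frac12\|P(t)f-f^*\|$ is non‑increasing. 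Hence $r(t)\downarrow r_\infty$, and everything comes down to showing $r_\infty=0$.

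Now the two structural inputs. (a) As $0\le P(t)f\le cf^*\in L^1$, the orbit $\{P(t)f\}$ and its Cesàro averages $A_Tf=\frac1T\int_0^TP(t)f\,dt$ are uniformly integrable, hence relatively weakly compact in $L^1$. A standard averaging estimate ($\|P(s)A_Tf-A_Tf\|\le 2s/T$) shows that any weak cluster point of $\{A_Tf\}$ is an invariant density, so by \emph{uniqueness} $A_Tf\rightharpoonup f^*$; in particular mass recurs: for every set $B$ with $0<\mu(B)<\infty$ we have $\frac1T\int_0^T\!\int_B P(t)f\,d\mu\,dt\to\int_Bf^*\,d\mu>0$ (here $f^*>0$ a.e. is used). (b) By partial integrality fix $t_0$ with $P(t_0)g\ge\int q(t_0,\cdot,y)g(y)\,\mu(dy)$ and $\int_X\!\int_X q(t_0,x,y)\,\mu(dx)\,\mu(dy)>0$; truncating, choose a bounded sub‑kernel $\tilde q$ supported on a set $A\times B$ with $\mu(A),\mu(B)<\infty$, still with positive integral, and with $\int_A\tilde q(x,\cdot)\,\mu(dx)\ge\delta>0$ on $B$; put $\tilde Tg=\int\tilde q(\cdot,y)g(y)\,\mu(dy)$, so $0\le\tilde Tg\le P(t_0)g$ for $g\ge0$. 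The key point is that $\tilde T$ sends norm‑bounded weakly convergent sequences to norm‑convergent ones (dominated convergence, since $\tilde q(x,\cdot)\in L^\infty$ and $\tilde Tg_n\le(\sup\tilde q)\,\mathbf 1_A\|g_n\|_1$).

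Finally the closing argument, which is where I expect the real work to be. Assume $r_\infty>0$ and set $d(t)=(f^*-P(t)f)^+\le f^*$, so $\|d(t)\|=r(t)\downarrow r_\infty$, and, since $P(s)\min(a,b)\le\min(P(s)a,P(s)b)$, $P(s)d(t)\ge d(t+s)$ with $\|P(s)d(t)-d(t+s)\|=r(t)-r(t+s)\to0$. Pick $t_n\to\infty$ along which $\int_BP(t_n)f\,d\mu\ge\frac12\int_Bf^*\,d\mu$ (possible by (a)) and pass to weak limits $P(t_n)f\rightharpoonup g_\infty$, $d(t_n)\rightharpoonup d_\infty$ with $\|d_\infty\|=r_\infty$; then from $P(t_n+t_0+t)f\ge P(t)\tilde TP(t_n)f$ and (b) one obtains, after passing to the limit, $P(t_0+t)d_\infty\le\big(f^*-P(t)\tilde Tg_\infty\big)^+$ for every $t\ge0$, where $\tilde Tg_\infty$ has strictly positive mass $\beta_1\ (\ge\frac\delta2\int_Bf^*)$. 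Hence $r_\infty=\|P(t_0+t)d_\infty\|\le 1-\int\min\big(f^*,P(t)\tilde Tg_\infty\big)\,d\mu$ for all $t$. The hard part is to turn this into $r_\infty=0$: one must show that the forward orbit of the density $\beta_1^{-1}\tilde Tg_\infty$ cannot maintain a deficit of size $r_\infty$ against $f^*$. I would attack this by feeding $\beta_1^{-1}\tilde Tg_\infty$ back into the same recurrence/monotonicity machinery and iterating the smoothing, with an induction on the amount of $f^*$‑mass the orbit has provably absorbed, so that the residual deficit is driven to $0$. Perhaps cleaner is to recast the whole proof through the Lasota--Mackey lower‑function criterion (asymptotic stability $\Leftrightarrow$ existence of $h\ge0$, $\|h\|>0$, with $\|(P(t)f-h)^-\|\to0$ for all $f\in D$): steps (a)--(b) are meant to manufacture such an $h$ of the shape $\varepsilon f^*$, and since $\varepsilon f^*$ is invariant the norm $\|(P(t)f-\varepsilon f^*)^-\|$ is automatically non‑increasing, so it suffices to make it small along one sequence of times — which the integral part delivers.
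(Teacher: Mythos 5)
First, a point of reference: the paper does not prove this theorem --- it is quoted, with attribution, from \cite{PR-jmaa2} --- so there is no in-paper proof to compare against, and your proposal has to stand on its own. It does not: there is a genuine gap at the decisive step. Your preliminary reductions are correct and standard (reduction to $f\le cf^*$, monotonicity of $r(t)=\tfrac12\|P(t)f-f^*\|$, weak compactness of the dominated orbit, identification of the Ces\`aro weak limit with $f^*$ via uniqueness, and the weak-to-norm continuity of the truncated kernel operator $\tilde T$), but the conclusion $r_\infty=0$ is exactly where the content of the theorem sits, and you explicitly leave it as ``the hard part,'' offering only two unexecuted strategies. The inequality you actually reach, $r_\infty\le 1-\int\min(f^*,P(t)\tilde Tg_\infty)\,d\mu$, is vacuous whenever $\beta_1=\|\tilde Tg_\infty\|\le 1-r_\infty$, because the integral on the right is at most $\beta_1$; the proposed ``induction on the amount of absorbed mass'' is never formulated. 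The lower-function variant has the same defect: $h=\varepsilon f^*$ cannot be produced by your smoothing step, since $\tilde Tg_\infty$ is bounded and supported on a set of finite measure, so $\|(P(t_n)f-\varepsilon f^*)^-\|\ge\varepsilon\int_{X\setminus A}f^*\,d\mu$ stays bounded away from zero when $\mu(X)=\infty$; a usable lower function would have to be manufactured by precisely the iteration you have not supplied.

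There is also a structural reason the sketch cannot be completed as written: nowhere do you use the strong continuity of $t\mapsto P(t)f$, yet the discrete-time analogue of the statement is false. Take $X=\{1,2\}$ with counting measure and $P$ the transposition: this is a (fully) integral stochastic operator with unique invariant density $f^*=(\tfrac12,\tfrac12)>0$, but for $f=(1,0)$ one has $r(n)\equiv\tfrac12$. Every step of your outline --- monotone $r$, Ces\`aro convergence to $f^*$, recurrence of mass in every finite-measure set, smoothing by the (full) kernel --- goes through in this example and is consistent with $r_\infty=\tfrac12>0$; your closing inequality reads $\tfrac12\le\tfrac12$ there. Hence no completion of the argument that ignores continuity in $t$ can succeed: the essential missing ingredient is the step that rules out rotational (periodic) behaviour, which in the cited source is obtained by combining the asymptotic decomposition with the continuity of the semigroup and the integral minorant. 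That ingredient is entirely absent from your proposal.
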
 
 
 \section{From the biological background to a mathematical description}
\label{s: biol-to-math}
 We start with a short biological description of the two phase-cell cycle models. The cell cycle is divided into the  resting and proliferating phase.
The duration  of the  resting phase is random variable $t_R$ which depends
on the maturity of a cell. 
The duration $t_P$ of the proliferating phase is
almost constant. Therefore, we assume that $t_P=\tau$, 
where $\tau$ is a positive constant.

The crucial role in the model  is played by 
a  parameter $m$  called \textit{maturity} which describes the state
of a cell in the cell cycle.
Without loss of generality we can assume 
that the minimum cell maturity $m_{\rm{min}}$ equals zero.

A cell can move from the resting phase to the proliferating phase 
with rate  $\varphi(m)$, i.e., a  cell with age $a$ and with maturity  $m$
enters the proliferation phase  during
a small time interval of length $\Delta t$
 with probability
$\varphi(m)\Delta t+o(\Delta t)$.

We assume that cells age with unitary velocity
and mature with a velocity $g_1( m)$ in the resting phase  
and with a velocity $g_2( m)$ in the proliferating phase.
The variable $a$ in the proliferating phase  is assumed to range from $a = 0$ 
at the point of commitment to $a = \tau$ at the point of cytokinesis. 
The maturity of the daughter cell  $\overline m$ is a function of the maturity
 of the mother cell $m$, i.e. $\overline m=h(m)$ (see Fig.~\ref{cell-pic}).
For example if $m$ is the volume of a cell,
 then $h(m) =m/2$. 
 
 \begin{figure}
\begin{center}
\begin{picture}(340,140)(20,10)
\includegraphics[viewport=115 554 492 703]{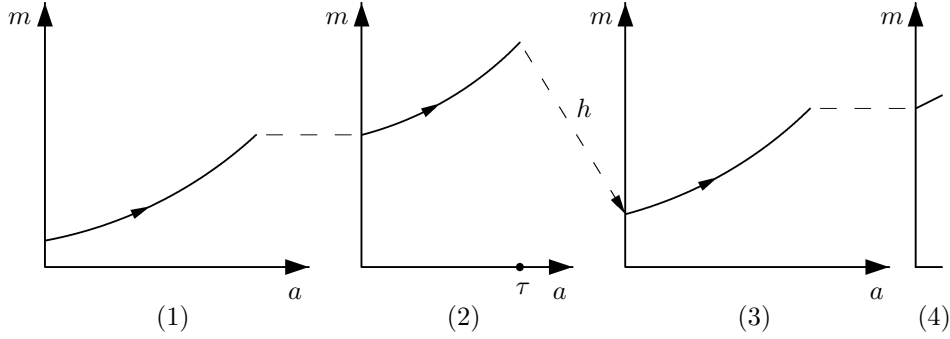}
\end{picture}
\end{center}
\caption{Evolution of maturity of a mother cell: (1) -- resting phase; 
(2) -- proliferating phase and 
a daughter cell: (3) -- resting  phase; (4)  --  proliferating phase.}
\label{cell-pic}
\end{figure}

Now we collect the assumptions concerning the model:
\vskip1mm

\noindent (M1) $\varphi$ is a continuous function such that
$\varphi (m)=0$ for $m\le m_P$ and
$\varphi (m)>0$ for $m> m_P$,
where $m_P>0$ is the minimum cell size when it can enter the proliferating phase,

\vskip1mm

\noindent (M2) $h\colon [m_P,\infty)\to [0,\infty)$ is  a $C^1$-function such that $h'(m)>0$,

\vskip1mm

\noindent (M3) $g_1\colon [0,\infty)\to (0,\infty)$
 and $g_2\colon [m_P,\infty)\to (0,\infty)$ are  
$C^1$ functions which increase sublinearly,

\vskip1mm

\noindent (M4) $\lim_{m\to\infty}
 \int \limits_{0}^{m} \dfrac{\varphi (r)}{g_1(r)}\,dr=\infty$.

Denote by $\pi_i(t,m_0)$ the solution of the equation  
\begin{equation}
m'(t)=g_i( m(t)),\quad i=1,2,
\end{equation}
with the initial condition $m(0)=m_0\ge 0$. 
From (M3) it follows that $\pi_i$ is a nonnegative and increasing function of both variables.
It is obvious that $h(\pi_2(\tau,m_P))=m_{\rm{min}}=0$.

Now, we introduce two auxiliary functions, which are used in both models. 
Let $\psi\colon [m_P,\infty)\to [0,\infty)$ be given by 
$\psi(m)=h(\pi_2(\tau,m))$. If $m$ is the maturity of the mother cell when
it enters the proliferating phase, then  $\psi(m)$ is the initial maturity of a daughter cell. From (M2) and (M3) it follows that 
$\psi'>0$. Moreover $\psi(m_P)=h(\pi_2(\tau,m_P))=0$.
Let $\lambda(m)=\psi^{-1}(m)=\pi_2(-\tau,h^{-1}(m))$.
Then $\lambda'>0$ and $\lambda(0)=m_P$.

\section{A discrete-time model}
\label{s: d-t-model}
Now we consider a discrete-time model \cite{Tyrcha}. This model  describes the relation between initial maturity of 
 mother and  daughter  cells. 
We assume that a new born cell has maturity $m_0$ and we want to find the distribution of maturity of the daughter cell. In order to do it
we first need to set down the distribution of $t_R$. 

Let $\Phi(t)$ be the cumulative distribution function of $t_R$, i.e.
 $\Phi(t)=\mathrm{Prob\,} (t_R\le t)$.
Then
\[
\mathrm{Prob\,}( t< t_R \le t+\Delta t \,|\, t_R> t)= \frac{\Phi(t+\Delta t) -\Phi(t)}{1-\Phi(t)}
= 
 \varphi (\pi_1(t,m_0))\Delta t+o(\Delta t).
 \]
From this equation we obtain
\[
\Phi'(t)=(1-\Phi(t)) \varphi (\pi_1(t,m_0))
\]
 and we get
\begin{equation}
\label{tyr1}
\Phi(t)=1-\exp\Big\{- \int_0^t\varphi (\pi_1(s,m_0)) \,ds\Big\}.
\end{equation}
Since $d\pi_1/ds=g_1(\pi_1(s,m_0))$ we obtain
\[
\int_0^t\varphi (\pi_1(s,m_0)) \,ds=\int_{m_0}^{\pi_1(t,m_0)}\frac{\varphi(m)}{g_1(m)}\,dm=
Q(\pi_1(t,m_0))- Q(m_0),
\]
where $Q(m)=\int \limits_{0}^{m} 
\dfrac{\varphi (r)}{g_1(r)}\,dr$.
Hence
\begin{equation}
\label{tyr2}
\Phi(t)=1-e^{ Q(m_0) -Q(\pi_1(t,m_0))}.
\end{equation}
According to (M4) $\lim_{m\to\infty}Q(m)=\infty$, which guaranties that 
each cell enters the proliferating phase with probability one.
From  (\ref{tyr2}) it follows that
\begin{equation}
\label{tyr3}
\begin{aligned}
\Phi'(t)
&
=\frac{d}{dt}(Q(\pi_1(t,m_0)))e^{Q(m_0)-Q(\pi_1(t,m_0))}
\\&
=\varphi(\pi_1(t,m_0))e^{Q(m_0)-Q(\pi_1(t,m_0))}.
\end{aligned}
\end{equation}

Since the random variable $\pi_1(t_R,m_0)$ is the maturity of the cell when it enters the proliferating phase,   
its maturity at the moment of division is given by $\pi_2(\tau,\pi_1(t_R,m_0))$. Finally
the maturity of the  daughter cell is given by the random variable $\xi=\psi(\pi_1(t_R,m_0))$.

In order to find the density of the random variable $\xi$  we determine the expectation 
of the random variable $\mathbb E(F(\xi))$, where $F$ is any bounded and continuous real function. 
We have
\[
\begin{aligned}
\mathbb E(F(\xi))&=\mathbb E(F(\psi(\pi_1(t_R,m_0))))=\int_0^{\infty}F(\psi(\pi_1(t,m_0)))\Phi'(t)\,dt\\
&=\int_0^{\infty}F(\psi(\pi_1(t,m_0)))\varphi(\pi_1(t,m_0))e^{Q(m_0)-Q(\pi_1(t,m_0))}\,dt\\
&=\int_{m_0}^{\infty}F(\psi(y))Q'(y)e^{Q(m_0)-Q(y)}\,dy\\
&=\int_{\lambda^{-1}(m_0)}^{\infty}F(m)\lambda'(m)Q'(\lambda(m))e^{Q(m_0)-Q(\lambda(m))}\,dm.
\end{aligned}
\]
Thus  the random variable  $\xi$
has the density
\[
\mathbf 1_{[\lambda^{-1}(m_0),\infty)} (m)\lambda'(m)Q'(\lambda(m))e^{Q(m_0)-Q(\lambda(m))}.
\]
Moreover, if we assume that  the distribution of the initial maturity of mother cells has a density $f$, then 
the initial maturity of the daughter cells has density 
\begin{equation}
\label{T-operator}
Pf(m)= \int_0^{\lambda(m)}\lambda'(m)Q'(\lambda(m))e^{Q(y)-Q(\lambda(m))}f(y)\,dy.
\end{equation}
Then $P$ is a stochastic operator on the space $L^1[0,\infty)$.

\begin{theorem}
\label{AF-dccm}
The operator $P$ satisfies the Foguel alternative, i.e. $P$ is asymptotically stable or sweeping from compact sets.
\end{theorem}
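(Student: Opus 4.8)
The plan is to apply Proposition~\ref{prop:FA-SO} to the operator $P$, so the work reduces to verifying its two hypotheses: condition (K) and the overlapping-supports property. Once these are established, the proposition immediately yields that $P$ is sweeping (from compact sets) or else has an invariant density, and in the latter case we must still argue that $P$ is asymptotically stable rather than merely partially convergent in the sense of (\ref{th2:ap2}). But since the overlapping property already forces $J$ to be at most a singleton and the periodic operator $S$ to be a rank-one projection (as noted in the proof of Proposition~\ref{prop:FA-SO}), the existence of an invariant density $f^*$ with support $A$ gives $\|\mathbf 1_A P^nf - \alpha(f)f^*\|\to 0$, and together with the sweeping conclusion on $X\setminus A$ for compact sets this is exactly the Foguel alternative. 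So structurally the proof is short modulo (K) and overlapping.

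First I would verify condition (K). From the explicit formula (\ref{T-operator}), $P$ is an integral operator with kernel
\[
k(m,y)=\mathbf 1_{[0,\lambda(m)]}(y)\,\lambda'(m)Q'(\lambda(m))e^{Q(y)-Q(\lambda(m))}.
\]
Here $\lambda$ is a $C^1$ increasing bijection of $[0,\infty)$ with $\lambda(0)=m_P$, and $Q$ is $C^1$ with $Q'(r)=\varphi(r)/g_1(r)$; note $Q'(\lambda(m))>0$ precisely when $\lambda(m)>m_P$, i.e. when $m>0$. Thus for any fixed $m>0$ the kernel $k(m,\cdot)$ is strictly positive and continuous on $(0,\lambda(m))$, and $k(\cdot,\cdot)$ itself is jointly continuous on the open region $\{(m,y):0<y<\lambda(m)\}$. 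Given $y_0\in[0,\infty)$, choose any $m_1>\lambda^{-1}(y_0)$ (so that $y_0<\lambda(m_1)$ strictly) with $m_1>0$; then $k$ is continuous and positive in a neighbourhood of $(m_1,y_0)$, which by the remark following condition (K) in the excerpt is exactly what is needed. One takes $t=1$ (i.e. one application of $P$), an $\varepsilon>0$ small enough that $B(y_0,\varepsilon)\subseteq(0,\lambda(m_1))$ adjusted near the boundary $y_0=0$ where the interval is $[0,\lambda(m_1))$, and $\eta(m)=c\,\mathbf 1_{[m_1-\delta,m_1+\delta]}(m)$ for a suitable constant $c>0$ coming from the infimum of the continuous positive kernel on the compact box.

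The main obstacle will be the overlapping-supports property, since here I cannot just read it off the kernel. The point is to understand how $\supp P^nf$ grows. From the formula, if $\supp f\subseteq[0,b]$ then $Pf$ is supported in $[0,\lambda^{-1}(b)]$ only if $\lambda^{-1}(b)$ makes sense; more usefully, $Pf(m)>0$ as soon as there is mass of $f$ in $[0,\lambda(m)]$, and since $\lambda(m)\to\infty$, the support of $Pf$ always contains an interval $[0,a]$ with $a>0$ whenever $f\not\equiv0$ — indeed $Pf>0$ on $(0,\lambda^{-1}(\inf\supp f)\,)$... I would instead argue more robustly: for any density $f$, $Pf(m)\ge \lambda'(m)Q'(\lambda(m))e^{-Q(\lambda(m))}\int_0^{\lambda(m)\wedge \text{(something)}}e^{Q(y)}f(y)\,dy$, which is strictly positive for every $m$ with $\lambda(m)>\text{ess inf}\supp f$, hence $\supp Pf\supseteq(m_*,\infty)$ where $m_*=\lambda^{-1}(\operatorname{ess\,inf}\supp f)$, or all of $(0,\infty)$ if that ess inf is $0$. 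The decisive observation is then that $\operatorname{ess\,inf}\supp Pf \le \lambda^{-1}(\operatorname{ess\,inf}\supp f)$, and since $\lambda^{-1}$ is a contraction towards its fixed structure — more precisely, since $\lambda(m)>m$ fails/holds depending on the model, one must check $\lambda^{-1}$ pushes left. Actually $\lambda(0)=m_P>0=\lambda^{-1}\!\circ\lambda(0)$ shows $\lambda^{-1}(m_P)=0$, so $\operatorname{ess\,inf}\supp P^nf$ decreases and, I claim, reaches $0$ (or becomes $<m_P$) in finitely many steps, after which $\supp P^{n}f\supseteq(0,\infty)$; then for any $f,g$ the supports of $P^nf$ and $P^ng$ both contain a common interval, giving positive-measure overlap. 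Nailing down the finitely-many-steps claim — i.e. showing the iterates $\lambda^{-1}(\lambda^{-1}(\cdots))$ of any starting value fall below $m_P$ in finite time, using $\lambda^{-1}(m)<m$ for $m>0$ or a quantitative version thereof from (M2)–(M3) — is the real content, and I would handle it by a monotone-sequence/fixed-point argument: the decreasing sequence $b_{n+1}=\lambda^{-1}(b_n)$ converges to a fixed point of $\lambda^{-1}$ in $[0,\infty)$, the only candidate is a point with $\lambda(b)=b$; ruling this out (or showing any such fixed point is $\le m_P$ so that supports already fill $(0,\infty)$) completes the overlapping verification and hence the theorem.
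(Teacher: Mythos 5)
Your overall strategy coincides with the paper's: apply Proposition~\ref{prop:FA-SO} after checking condition (K) from the explicit kernel and then checking the overlapping-supports property; your (K) verification is fine. The problem is in the overlapping-supports step, which you correctly single out as the crux but then reduce to the wrong claim. You propose to show that the iterates $b_{n+1}=\lambda^{-1}(b_n)$ of $\operatorname{ess\,inf}\supp f$ fall below $m_P$ in finitely many steps, so that $\supp P^nf$ eventually fills $(0,\infty)$. That claim is not available under (M1)--(M4): the inequality $\lambda(m)>m$ (equivalently $\psi(m)<m$) is precisely condition (\ref{EA1}), which the paper imposes only as an \emph{extra} hypothesis for the continuous-time model, not for the discrete operator $P$. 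Under (M1)--(M4) the function $\lambda$ may have fixed points above $m_P$, in which case the decreasing sequence $b_n$ stalls at such a fixed point and the supports never fill $(0,\infty)$; so the "finitely many steps" step of your plan can genuinely fail. Fortunately the whole iteration is unnecessary. You already record the decisive facts: $Pf(m)>0$ whenever $\int_0^{\lambda(m)}f(y)\,dy>0$, and $\lambda(m)\to\infty$, so for every density $f$ the support of $Pf$ contains a half-line $[\alpha_f,\infty)$. The overlapping property only requires that for each pair $f,g\in D$ there exist \emph{some} $t$ with $\mu(\supp P^tf\cap \supp P^tg)>0$, and $t=1$ already works: the two half-lines meet in a set of infinite measure. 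This one-line observation is exactly the paper's argument.

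There is a second, smaller gap at the end. In the non-sweeping branch you assert that conclusions (\ref{th2:ap2}) and (\ref{th2:sw2}) together "are exactly the Foguel alternative". They are not: to upgrade them to asymptotic stability one must also check that $Y=X\setminus\supp f^*$ is contained in a compact set --- this is the explicit additional hypothesis in the last sentence of Proposition~\ref{prop:FA-SO}, and without it mass could escape to infinity inside $Y$. The fix is the same support computation: $f^*=Pf^*$, so $\supp f^*$ contains a half-line $[c,\infty)$ and its complement lies in the bounded interval $[0,c]$. The paper inserts precisely this sentence before invoking the proposition, and your write-up should too.
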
 
\begin{proof}
The operator $P$ is of the form  
 \[
 Pf(m)=\int_0^{\infty}q(m,y)f(y)\,dy
\] 
with $q(m,y)= w(m) \mathbf 1_{[0,\lambda(m)]}(y)e^{Q(y)}$ and $w(m)=\lambda'(m)Q'(\lambda(m))e^{-Q(\lambda(m))}$.
Since $\lambda(0)=m_P$ and $\lambda'>0$,    
$\lambda(m)>m_P$ for $m>0$,
and according to (M1) we have $\varphi(\lambda(m))>0$ for $m>0$.
Thus
$Q'(\lambda(m))=\varphi(\lambda(m))/g_1(\lambda(m))>0$ for $m>0$. 
We also have
\[
\lambda'(m)=\frac{g_2(\lambda(m))}{g_2(h^{-1}(m))h'(h^{-1}(m))}>0
\]
for $m\ge 0$, which gives $w(m)>0$ for $m>0$. If we fix $y_0\ge 0$, then we find $m_0>0$ such that $\lambda(m_0) > y_0$.
Then  the function $\eta(m)= w(m)\mathbf 1_{[m_0,\infty)}(m)$  satisfies (\ref{w-eta2})
 for  sufficiently small  $\varepsilon>0$. Thus condition (K) is fulfilled. Now we fix a density $f$ and let $\bar y\ge 0$ be a point such  that
 $\int_{\bar y}^{\bar y+\varepsilon} f(y)\,dy >0$ for each $\varepsilon>0$.
Since $\lambda$ is an increasing function and $\lim_{m\to\infty}\lambda(m)=\infty$, there is $\bar m>0$ such that 
$\lambda(m)>\bar y$ for $m\ge \bar m$. From the definition of $P$ it follows that $Pf(m)>0$ if $\int_0^{\lambda(m)} f(y)\,dy>0$.
Hence $Pf(m)>0$ for $m\ge \bar m$. Since for
any two densities $f$ and $g$ there is an $\alpha>0$
such that  $Pf(m)>0$ and $Pg(m)>0$ for $m\ge \alpha$, the operator $P$ overlaps supports.
Moreover, if $P$ has an invariant density $f^*$ then 
$[0,\infty)\setminus \supp f^* \subseteq [0,c]$ for some $c>0$. According to Proposition~\ref{prop:FA-SO} the operator $P$ satisfies the
Foguel alternative. 
\end{proof}

Theorem~\ref{AF-dccm} does not establish when the operator $P$ is asymptotically stable or sweeping. 
Here we give some sufficient conditions for these properties.
\begin{proposition}
\label{asym-P}
Let $\alpha(m)=Q(\lambda(m))-Q(m)$.
The following conditions hold:

\noindent {\rm(a)}
 if $\liminf\limits_{m\to\infty} \alpha(m)>1$, 
then $P$ is asymptotically stable.

\noindent {\rm(b)}  if  $\alpha(m)\le1$ for
sufficiently large $m$, then $P$ is sweeping from each bounded interval,

\noindent {\rm(c)}  if $\inf \alpha(m)>-\infty$, then the operator $P$ is completely mixing, i.e.
\[
\lim_{n\to\infty}\|P^nf-P^ng\|=0\qquad
\text{for $f,g\in D$.}
\] 
\end{proposition}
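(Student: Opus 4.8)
The plan is to translate $P$ into a one-dimensional random walk with state-dependent drift and then extract each of (a)--(c) from this picture together with the Foguel alternative (Theorem~\ref{AF-dccm}). Let $m_n$ be the initial maturity of a cell in the $n$-th generation, i.e.\ the Markov chain associated with $P$, and let $Y_n$ be its maturity at the moment of commitment, so $m_{n+1}=\psi(Y_n)$, equivalently $Y_n=\lambda(m_{n+1})$. By (\ref{tyr2}) the conditional survival function of $Y_n$ given the past is $\exp(-(Q(y)-Q(m_n)))$ for $y\ge m_n$, so $E_n:=Q(\lambda(m_{n+1}))-Q(m_n)$ are i.i.d.\ standard exponential, and, writing $V(m)=Q(\lambda(m))$ and recalling $\alpha(m)=Q(\lambda(m))-Q(m)$,
\[
V(m_{n+1})=V(m_n)+E_n-\alpha(m_n).
\]
Thus $\{V(m_n)\}$ is a random walk on $(0,\infty)$ with conditional drift $1-\alpha(m_n)$ and downward part bounded by $\alpha(m_n)$. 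Two elementary facts are used throughout: $Q(m_n)=V(m_n)-\alpha(m_n)\ge0$, and $\alpha$ is bounded on every $[0,b]$ because $\alpha(m)\le Q(\lambda(m))$; the long-time behaviour is then dictated by the sign of $1-\alpha$ at infinity.

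For (a), pick $\delta>0$ and $b$ with $\alpha\ge1+\delta$ on $[b,\infty)$; since $\alpha$ is continuous with $\liminf>1$ it is bounded below, say $\alpha\ge-c_1$. Starting from a compactly supported density, $Q(m_{n+1})\le Q(m_n)+E_n+c_1$ gives $\mathbb E\,Q(m_n)<\infty$, and telescoping $\mathbb E\,Q(m_{k+1})=\mathbb E\,Q(m_k)+1-\mathbb E\,\alpha(m_{k+1})$ with $\mathbb E\,\alpha(m_{k+1})\ge(1+\delta)-(1+\delta+c_1)\Prob(m_{k+1}<b)$ yields, with $C=1+\delta+c_1$,
\[
0\le \mathbb E\,Q(m_n)\le \mathbb E\,Q(m_0)-n\delta+C\sum_{k=1}^{n}\Prob(m_k<b).
\]
Hence $\tfrac1n\sum_{k\le n}\Prob(m_k<b)$ stays bounded away from $0$, which is incompatible with $\int_0^b P^kf\,dm\to0$. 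So $P$ is not sweeping, and by Theorem~\ref{AF-dccm} it is asymptotically stable. (Read as a Foster condition, the same estimate also produces the invariant density directly.)

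For (b), with $\alpha\le1$ on $[M_0,\infty)$ the function $V=Q\circ\lambda$ satisfies $\mathbb E[V(m_{n+1})-V(m_n)\mid m_n]=1-\alpha(m_n)\ge0$ whenever $m_n\ge M_0$, and there the increments are bounded below, $V(m_{n+1})-V(m_n)=E_n-\alpha(m_n)\ge-1$; moreover $V\to\infty$, and by condition~(K) the chain is not trapped in $\{m\le M_0\}$, since from there it moves into $\{m>M_0\}$ with probability at least $\exp(-Q(\lambda(M_0)))$. A standard escape argument for Markov chains with eventually non-negative drift and controlled downward jumps then shows the chain has no invariant probability measure, i.e.\ $P$ has no invariant density, hence is not asymptotically stable, hence sweeping from compact sets by Theorem~\ref{AF-dccm}, in particular from each bounded interval. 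I expect this to be the main obstacle: converting ``eventually non-negative drift with bounded downward jumps'' into ``no invariant density'' cleanly enough to cover the borderline case $\alpha\equiv1$ near infinity, and pinning down the precise criterion for stochastic operators that does it.

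For (c), put $u=Q(\lambda(m))$; in this coordinate one application of $P$ is an increasing $C^1$ change of variables $u\mapsto Q(m)$ followed by convolution with $\mathrm{Exp}(1)$, so after $n$ steps $Q(m_n)=Q(m_0)+\sum_{k<n}E_k-\sum_{k\le n}\alpha(m_k)$, where $\sum_{k<n}E_k$ has the $\mathrm{Gamma}(n,1)$ density and the drift term is $\ge-nc_0$ because $\alpha\ge-c_0$. Since the $L^1$-distance between $\mathrm{Gamma}(n,1)$ and any fixed bounded translate of it tends to $0$, the $n$-fold $\mathrm{Exp}(1)$-smoothing homogenises $P^nf$ and $P^ng$; the hypothesis $\inf\alpha>-\infty$ is exactly what stops the deterministic part from carrying mass toward $-\infty$ faster than this smoothing acts. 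Making this precise through the classical completely-mixing lemma for transport operators of this form gives $\|P^nf-P^ng\|\to0$ for all $f,g\in D$, which is the assertion; this is the most technical of the three and where one leans hardest on the random-walk reduction.
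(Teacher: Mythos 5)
The first thing to note is that the paper does not prove Proposition~\ref{asym-P}: it states that (a), (b) and (c) are quoted, respectively, from \cite{GL}, \cite{LoR} and \cite{Ru-cm}. You are therefore attempting to reprove three published results from scratch. Your reduction is correct and is, in essence, the structure underlying all three references: by (\ref{tyr2}) the quantity $E_n=Q(\lambda(m_{n+1}))-Q(m_n)$ is $\mathrm{Exp}(1)$ conditionally on the past, so with $V=Q\circ\lambda$ the chain becomes $V(m_{n+1})=V(m_n)+E_n-\alpha(m_n)$, equivalently $U_{n+1}=T(U_n)+E_n$ with $U_n=V(m_n)$ and $T(U_n)=Q(m_n)$. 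Your proof of (a) is essentially complete and correct: the telescoped drift inequality forces the Ces\`aro averages of $\Prob(m_k<b)$ to stay above $\delta/C>0$ for a compactly supported initial density, which rules out sweeping from $[0,b]$, and Theorem~\ref{AF-dccm} upgrades ``not sweeping'' to asymptotic stability. This is a genuinely different (and arguably cleaner) route than \cite{GL}, which works with a Lyapunov-type lower-bound function; your version buys brevity at the price of leaning on the Foguel alternative already established in the paper.

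Parts (b) and (c), however, each have a genuine gap at exactly the decisive step. In (b) you appeal to ``a standard escape argument for Markov chains with eventually non-negative drift and controlled downward jumps'' to conclude that no invariant density exists. No off-the-shelf statement of this kind covers the borderline case $\alpha\equiv1$ near infinity: there the drift of $U_n$ vanishes, and the stationarity identity $\mathbb E_\pi[\alpha]=1$ cannot yield a contradiction because $\alpha$ may exceed $1$ on a bounded set and compensate; excluding an invariant probability then requires a truncation/null-recurrence argument or the explicit sub-invariant, non-integrable function constructed in \cite{LoR} --- and that is the entire content of part (b). In (c) the comparison with translates of the $\mathrm{Gamma}(n,1)$ density does not close: in the representation $Q(m_n)=Q(m_0)+\sum_{k<n}E_k-\sum_{k=1}^{n}\alpha(m_k)$ the drift term is a trajectory-dependent random variable, correlated with the $E_k$ and bounded only from below by $-nc_0$, not a fixed bounded translate, and the hypothesis $\inf\alpha>-\infty$ gives no control of the difference of the drift terms along two trajectories started from $f$ and $g$. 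Making the smoothing heuristic rigorous is precisely what \cite{Ru-cm} does. As written, your (b) and (c) are correct reductions to the cited results rather than proofs of them; if you intend the argument to remain a citation of \cite{LoR} and \cite{Ru-cm} at those two points, it matches the paper, but then the intermediate random-walk machinery is not needed.
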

These results were proved, respectively, (a) in 
\cite{GL}, (b) in \cite {LoR}, and (c) in~\cite{Ru-cm}.   

If the operator $P$ has an invariant density $f^*$, then we can find the stationary distribution of  age and maturity in both phases.
 From (\ref{tyr2}) it  follows that if a cell has the initial maturity $m_0$, then it will not have left the resting phase before age $a$ with probability
$e^{ Q(m_0) -Q(\pi_1(a,m_0))}$ and has maturity $\pi_1(a,m_0)$ at age  $a$.
Thus the probability that cell remains in the resting phase at age $a$ and has maturity $\le m$ at this age is given by the formula
\begin{equation}
\label{density-both-1p1}
\int_ 0^{\pi_1(-a,m)}f^*(m_0)e^{ Q(m_0) -Q(\pi_1(a,m_0))}\,dm_0.
\end{equation}
Denote by $\tilde f^*(a,m,i)$ the stationary density of the distribution of age and maturity in both phases.
Then from (\ref{density-both-1p1}) it follows that
\begin{equation}
\label{density-both-1p2}
\begin{aligned}
\tilde f^*(a,m,1) &=
c\frac{d}{dm}\int_ 0^{\pi_1(-a,m)}f^*(m_0)e^{ Q(m_0) -Q(\pi_1(a,m_0))}\,dm_0\\
&=c\frac{g_1(\pi_1(-a,m))}{g_1(m)}f^*(\pi_1(-a,m))e^{Q(\pi_1(-a,m))-Q(m)}
\end{aligned}
\end{equation}
for $m\ge \pi_1(a,0)$ and $\tilde f^*(a,m,1)=0$ for  $m< \pi_1(a,0)$,
where $c>0$ is a normalized constant. 
Integrating  (\ref{density-both-1p2}) over the age variable $a$ gives
\begin{equation}
\label{density-R-in}
\bar f^*(m,1)=\int_0^{\infty}\tilde f^*(a,m,1)\,da =
\frac{c}{g_1(m)}e^{-Q(m)}
\int_0^m e^{Q(x)}f^*(x)\,dx.
\end{equation}

In order to find $\tilde f^*(a,m,2)$ we need to find  the distribution of maturity at the beginning of proliferating phase.
We claim that the density of this distribution  is given by
$f_p^*(m)=\psi'(m)f^*(\psi(m))$. 
Indeed, if $\zeta$ is  a random variable having density $f^*$, then  the density of random variable
$\lambda (\zeta)$ coincides with  the  density of maturity at the beginning of proliferating phase. Thus 
\[
\Prob(\lambda (\zeta)\le m)=\Prob(\zeta\le  \psi(m))=\int_0^{\psi(m)}f^*(r)\,dr,
\]
which proves our claim.
Analogously to  (\ref{density-both-1p2}) we find that 
\begin{equation}
\label{density-both-2p2}
\begin{aligned}
\tilde f^*&(a,m,2) =c\frac{g_2(\pi_2(-a,m))}{g_2(m)}f^*_p(\pi_2(-a,m))\\
 &=c\frac{g_2(\pi_2(-a,m))}{g_2(m)}\psi'(\pi_2(-a,m))f^*(\psi(\pi_2(-a,m))),
\end{aligned}
\end{equation}
for $m\ge \pi_2(a,m_P)$ and $\tilde f^*(a,m,2)=0$ for  $m< \pi_2(a,m_P)$.
We have the same constant $c$  in the both formulas (\ref{density-both-1p2}) and (\ref{density-both-2p2}) 
because $\tilde f^*(\tau,m,2)=h'(m)\tilde f^*(0,h(m),1)$.
We can  find  the constant $c$ 
using the  formula 
\[
 \int_0^{\infty}\int_0^{\infty} \tilde f^*(a,m,1) \,da\,dm+ \int_{m_P}^{\infty}\int_0^{\tau}\tilde f^*(a,m,2) \,da\,dm =1.
\]
It is clear that the second integral equals $\tau$
and the first integral is the mean length  $T_R$ of the resting phase and 
\[
\begin{aligned}
T_R&=\int_0^{\infty}\int_0^{\infty} \tilde f^*(a,m,1) \,da\,dm\\
&=\int_0^{\infty}\int_{\pi_1(a,0)}^{\infty}
\frac{g_1(\pi_1(-a,m))}{g_1(m)}f^*(\pi_1(-a,m))e^{Q(\pi_1(-a,m))-Q(m)}\,dm\,da.
\end{aligned}
\]
Substituting $y=\pi_1(-a,m)$ and then $x=\pi_1(a,y)$  we obtain 
\begin{equation}
\label{wz:t_R}
\begin{aligned}
T_R&=\int_0^{\infty}\int_0^{\infty}
f^*(y)e^{Q(y)-Q(\pi_1(a,y))}\,dy\,da\\
&=\int_0^{\infty}\int_{y}^{\infty}\frac{1}{g_1(x)} e^{Q(y) -Q(x)}f^*(y)\,dx\, dy.
\end{aligned}
\end{equation}
Thus $c=1/(T_R+\tau)$ assuming that $T_R<\infty$.

\section{A continuous-time model}
\label{s: c-t-model}
Now we consider a continuous version of the model.
The cell cycle can be described as a piecewise deterministic Markov process.  
We consider a sequence of consecutive descendants
of a single cell. Let $s_n$ be a time when a
cell from the $n$-generation enters a resting phase and $t_n=s_n+\tau$
be a time of its division. 
If  $t_{n-1}\le  t<t_n$ 
then the state  $\boldsymbol \xi(t)=(a(t),m(t),i(t))$ of the $n$-th cell  
is described by age $a(t)$, maturity $m(t)$ and the index $i(t)$,
 where $i=1$ if  a cell is in the resting phase and $i=2$ if it is in the proliferating phase.
Random moments $t_0,s_1,t_1,s_2,t_2,\dots$ are called \textit{jump times}. 
Between jump times the  parameters change according to the following system 
of equations:
\begin{equation}
\left\{
\begin{aligned}
a'(t)&=1,\\
m'(t)&=g_{i(t)}(m(t)),\\
i'(t)&=0.
\end{aligned}
\right.
\end{equation} 
The process $\boldsymbol \xi(t)$  changes at jump points according 
to the following roles:
\[
a(s_n)=0,
\quad
m(s_n)=m(s_n^{-}),
\quad
i(s_n)=2,
\]
and
\[
a(t_n)=0,
\quad
m(t_n)=h(m(t_n^{-})) ,
\quad
i(t_n)=1.
\]
If $m(t_{n-1})=m_0$ then 
the  cumulative distribution function $\Phi$ of $s_n-t_{n-1}$ is 
given by (\ref{tyr2}).
Then $\boldsymbol \xi(t)$ is a time-homogeneous Markov process.
If the distribution of $\boldsymbol\xi(0)$ is given  by a density function $f(0,a,m,i)$, i.e.
a measurable function of $(a,m,i)$ such that 
\[
\Prob(\boldsymbol \xi(t)\in A\times{i})= \iint\limits_A f(0,a,m,i)\,da\,dm   
\]
for any  Borel set $A$ and $i=1,2$, then  
$\boldsymbol \xi(t)$ has a density  $f(t,a,m,i)$.

Having a time-homogeneous Markov process $\boldsymbol \xi(t)$ with the property that if the random variable $\boldsymbol \xi(0)$  has a density  $f_0$, then 
$\boldsymbol \xi(t)$  has a density  $f_t$,   we can define a stochastic semigroup $\{P(t)\}_{t\ge 0}$ corresponding to $\boldsymbol \xi(t)$
by $P(t)f_0=f_t$. The  proper choice of the space $X$ of values of the process $\boldsymbol \xi(t)$  plays an important role in investigations of  the process 
and the semigroup  $\{P(t)\}_{t\ge 0}$. We define
\[
X=\{(a,m,1)\colon m\ge \pi_1(a,0), \,\,a\ge 0\}\cup \{(a,m,2)\colon m\ge \pi_2(a,m_p), \,\,a\in [0,\tau]\}, 
\]
$\Sigma=\mathcal B(X)$ and $\mu$ is the product of the two-dimensional Lebesgue measure and the counting measure on the set $\{1,2\}$ (see Fig.~\ref{cell-pic2}). 
\begin{figure}
\begin{center}
\begin{picture}(340,150)(10,-70)  %do pliku pdf
\includegraphics[viewport=122 613 477 737]{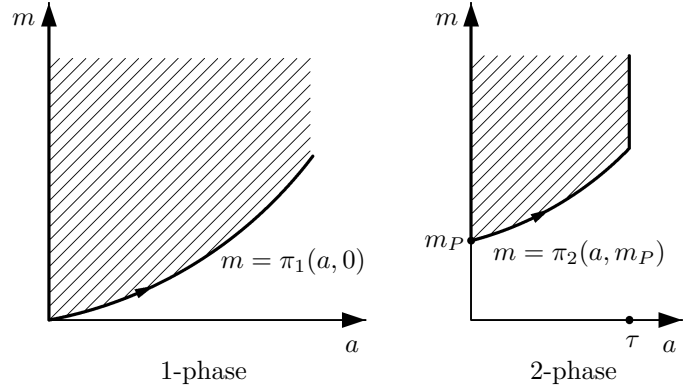}
\end{picture}
\end{center}
\caption{The set $X$}
\label{cell-pic2}
\end{figure}
Our aim is to check that the stochastic semigroup $\{P(t)\}_{t\ge 0}$ defined on $L^1(X,\Sigma,\mu)$ corresponding to the process $\boldsymbol \xi(t)$
satisfies the Foguel alternative and then to give some conditions for its asymptotic stability and sweeping.

We need two additional assumptions:
\begin{equation}
\label{EA1}
\psi(m)=h(\pi_2(\tau,m))<m \textrm{ for $m\ge m_P$}
\end{equation}
and 
\begin{equation}
\label{EA2}
h'(\pi_2(\tau,\bar m))g_2(\pi_2(\tau,\bar m))g_1(\bar m)\ne g_1(h(\pi_2(\tau,\bar m)))g_2(\bar m)
\end{equation}
for some $\bar m>m_P$.

Condition (\ref{EA1}) is not particularly restrictive because if $h(\pi_2(\tau,m_0))\ge m_0$ 
for some $m_0>m_P$, then  independently on the maturity of a cell, the probability that descended cells will have maturity $m< m_0$ goes to zero
as $t\to\infty$ and we can consider  a model with the minimal maturity $m_0$. 
If a mother cell has maturity $m>m_P$, then any number from the interval $(\psi(m),\infty)$ can be initial  maturity of
a daughter cell, any number from the interval $(\psi^2(m),\infty)$ can be initial maturity of a granddaughter cell if $\psi(m)>m_P$, etc. 
From condition (\ref{EA1}) it follows that for sufficiently large $n$ we have $\psi^n(m)\le m_P$. 
Since $\psi (m_P)=0$ we conclude that after a finite number of generations  the initial maturity of a descended cell can be any positive number $m$,
$m>\pi_1(a,0)$ can be the maturity of a  descended cell at age $a$ in the resting phase and 
$m>\pi_2(a,m_P)$ can be the maturity of a  descended cell at age $a$ in the proliferating phase.

Condition  (\ref{EA2}) seems to be technical but if  
\[
h'(\pi_2(\tau, m))g_2(\pi_2(\tau, m))g_1(m)= g_1(h(\pi_2(\tau, m)))g_2( m)
\]
for all $m\ge m_P$, then all descendants of a single  cell in the same generation have the same maturity at a given time $t$.
It means that the cell have \textit{synchronous growth} and we cannot expect the model is asymptotically stable.
 In particular if $g_1\equiv g_2$ and $h(m)=m/2$, then  (\ref{EA2}) reduces to $2g_2(m)\ne g_2(2m)$
 for some $m >\pi_2(\tau,m_P)$.
A similar condition appear in many papers concerning size-structured models 
\cite{BPR,DHT,GH,RP,RT-K-k}. 

Now we can formulate the Foguel alternative for semigroup $\{P(t)\}_{t\ge0}$ corresponding to the process $\boldsymbol \xi(t)$.
\begin{theorem}
\label{fa-semi}
The semigroup $\{P(t)\}_{t\ge0}$ satisfies the Foguel alternative, i.e. $\{P(t)\}_{t\ge0}$ is asymptotically stable or sweeping from compact sets.
\end{theorem}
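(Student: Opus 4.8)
The plan is to verify the hypotheses of Proposition~\ref{prop:FA-SO} for $\{P(t)\}_{t\ge0}$: condition~(K), the overlapping of supports, and the supplementary fact that any invariant density $f^*$ of $\{P(t)\}_{t\ge0}$ is strictly positive a.e. (so that $X\setminus\supp f^*$ is trivially contained in a compact set). Granting these, the first alternative of Proposition~\ref{prop:FA-SO} yields sweeping from compact sets and the second yields asymptotic stability, which is exactly the Foguel alternative; for the asymptotic stability one could equally invoke Theorem~\ref{asym-th2}, since the construction below shows in passing that $\{P(t)\}_{t\ge0}$ is partially integral.

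First I would check~(K). By the remark following the definition of~(K) it suffices, for every $y_0=(a_0,m_0,i_0)\in X$, to produce a time $t>0$, a point $x^*\in X$ and a minorant kernel $q(t,\cdot,\cdot)$ of $P(t)$ that is continuous near $(x^*,y_0)$ and positive at $(x^*,y_0)$. I would obtain such a kernel by restricting the distribution of $\boldsymbol\xi(t)$ to trajectories passing through two complete cell cycles (resting phase, jump into the proliferating phase, division — twice) followed by an initial segment of a third resting phase, ending at time $t$ in a state $(a,m,1)$. Writing $s_1,s_2$ for the lengths of the two resting phases, one has $a=t-s_1-s_2-2\tau$ and $m=\pi_1(a,m^{(2)})$ with $m^{(2)}=\psi(\pi_1(s_2,\psi(\pi_1(s_1,m_0))))$, and the corresponding part of $P(t)f$ is the image, under the map $(s_1,s_2)\mapsto(a,m)$, of the joint density of $(s_1,s_2)$ — a density which is continuous, and positive as long as the two jumps take place at maturities above $m_P$. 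Using the variational identity $\partial_r\pi_i(t,r)=g_i(\pi_i(t,r))/g_i(r)$ one finds that the Jacobian of this map equals a strictly positive factor times
\[
\psi'(p)\,g_1(p)-g_1(\psi(p)),\qquad p=\pi_1(s_1,m_0),
\]
and, since $\psi'(m)=h'(\pi_2(\tau,m))\,g_2(\pi_2(\tau,m))/g_2(m)$, this quantity vanishes precisely when equality holds in~(\ref{EA2}) with $\bar m=p$. Now~(\ref{EA1}) forces $\psi^n(m_0)\le m_P$ for large $n$, and from the range $[0,m_P]$ one more cycle can produce any prescribed maturity; hence after finitely many preliminary cycles I may assume $p$ equals the value $\bar m$ for which~(\ref{EA2}) holds. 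For that choice $(s_1,s_2)\mapsto(a,m)$ is a local diffeomorphism, the minorant acquires a continuous positive density near the corresponding point $x^*$, and~(K) follows. Trajectories starting in the proliferating phase or with nonzero age are reduced to this case by first running the deterministic flow, and target states in the proliferating phase are reached by appending one further jump.

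Next I would establish overlapping of supports and positivity of $f^*$. Combined with~(\ref{EA1}), the construction above yields the reachability statement: from a cell of arbitrary maturity the descendants can, after finitely many generations, attain — with positive transition density on an open set — any maturity $m>0$ at any age consistent with membership in $X$. Hence for every density $f$ the set $\supp P(t)f$ eventually contains any prescribed compact subset of $\{(a,m,i)\in X:m>0\}$; applied to two densities $f,g$ this gives overlapping of supports. Applied to an invariant density, for which $\supp f^*=\supp P(t)f^*$ for every $t\ge0$, it shows that $\supp f^*$ is co-null, i.e. $f^*>0$ a.e.

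The step I expect to be the main obstacle is the verification of~(K): one must show that the two independent sojourn times $s_1,s_2$ perturb the terminal state $(a,m)$ in two linearly independent directions, and it is exactly this non-degeneracy that~(\ref{EA2}) encodes, with~(\ref{EA1}) serving to steer the maturity into the range where~(\ref{EA2}) is effective. Everything else is routine: the overlapping of supports, the positivity of $f^*$ and the final appeal to Proposition~\ref{prop:FA-SO} require only the reachability statement, careful bookkeeping of the age variable and the two phases, and the elementary remark that $g_1\ge g_1(0)>0$ forces $\pi_1(a,0)\to\infty$, so that $\{(a,m,i)\in X:m\le M\}$ is bounded for every $M$.
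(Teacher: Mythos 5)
Your proposal is correct and follows essentially the same route as the paper: condition (K) is verified by tracking two consecutive cell cycles and showing that the Jacobian of the map from the two resting-phase sojourn times to the terminal $(a,m)$ is nonzero exactly where (\ref{EA2}) holds (your expression $\psi'(p)g_1(p)-g_1(\psi(p))$ is the paper's factor $L_2-1$ up to a positive multiple), with (\ref{EA1}) used to steer the maturity and to propagate positivity of any invariant density. The only cosmetic difference is the final appeal — you invoke Proposition~\ref{prop:FA-SO} via overlapping supports, while the paper concludes through Theorem~\ref{asym-th2} and Corollary~\ref{col-sw} — but both are equivalent packagings of the same decomposition results.
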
 
\begin{proof}
First we check condition (K). Let   $a_0>0$ and $m_0\in (\pi_1(a_0,0),\bar m)$ be the  age and  maturity of a cell at time $0$.
Define the functions $\theta_1(t_1,t_2)=t-t_1-t_2-2\tau$
 and  $\theta_2(t_1,t_2)=m_7$, where
 \[
 \begin{aligned}
&m_1=\pi_1(a_0+t_1,m_0),\quad m_2=\pi_2(\tau,m_1), \quad m_3=h(m_2),\\
& m_4=\pi_1(t_2,m_3), \quad m_5=\pi_2(\tau,m_4), \quad m_6=h(m_5),\\
& m_7=\pi_1(t-t_1-t_2-2\tau,m_6).
\end{aligned}
\] 
Then $\theta=(\theta_1,\theta_2)$ is  age and  maturity of a granddaughter cell at time $t>t_1+t_2+2\tau$.
It is easy to check that
\[
\theta'(t_1,t_2)=
\left[
\begin{array}{ll}
-1&-1\\
-g_1(m_7)+L_1L_2&-g_1(m_7)+L_1
\end{array} 
\right],
\]
where
\[
\begin{aligned}
L_1&=\frac{h'(m_5)g_1(m_7)g_2(m_5)g_1(m_4)}{g_1(m_6)g_2(m_4)},\\
L_2&=\frac{h'(m_2)g_2(m_2)g_1(m_1)}{g_1(m_3)g_2(m_1)}.
\end{aligned}
\]
Hence
\[
\det \theta'(t_1,t_2)=g_1(m_7)+L_1 L_2-g_1(m_7)-L_1=   L_1(L_2-1)
\]
and since $L_1\ne 0$,  the determinant of $\theta'(t_1,t_2)$ is different from zero  if and only if 
\begin{equation}
\label{w-K1}
h'(m_2)g_2(m_2)g_1(m_1)\ne g_1(m_3)g_2(m_1).
\end{equation}
Since $m_2=\pi_2(\tau,m_1)$ and $m_3=h(\pi_2(\tau,m_1))$ 
from (\ref{EA2}) it follows that condition (\ref{w-K1}) holds for $m_1$ sufficiently close to $\bar m$.  
Fix $t_1^0,t_2^0,t$  such that $\pi _1( a_0+t_1^0,m_0)=\bar m$, $t_2^0>0$, and $t>t_1^0+t_2^0+2\tau$.  
The times $t_1$ and $t_2$ are  random variables and we can find  densities of their distributions using formula (\ref{tyr3}). 
According to this formula there exist $\delta>0$ and $\varepsilon_1 >0$ such that 
their joint density $p(t_1,t_2)$   is bounded below by $\varepsilon_1 $ for 
$(t_1,t_2)\in (t_1^0-\delta,t_1^0+\delta)\times (t_2^0-\delta,t_2^0+\delta)$.
Since the age and  maturity of a granddaughter cell at time $t$ 
is given by $(a,m)=\theta(t_1,t_2)$
the function 
\[
\tilde p(a,m)=|\det (\theta^{-1})'(a,m)|p(\theta^{-1}(a,m))
\]
is the density of the distribution of $(a,m)$. Since $p(t_1,t_2)$ is bounded below by  $\varepsilon_1 >0$ and 
$\det \theta'(t^0_1,t^0_2)\ne 0$ 
we conclude that the density $\tilde p(a,m)$  is bounded below by some $\varepsilon_2>0$ for  $(a,m)$ from some neighbourhood  $V$ of  $\theta(t_1^0,t_2^0)$.
It means that
\[
q(t,(a,m,1),(a_0,m_0,1))\ge \varepsilon_2
\]
for $ (a,m)\in V$. We can also find a  neighbourhood  $U$ of  $(a_0,m_0,1)$ such that 
\[
q(t,x,y)\ge \varepsilon_2/2
\]
for  $x\in V$ and $y\in U$, 
i.e. 
(\ref{w-eta2})  holds for $y_0=(a_0,m_0,1)$.
Starting from any point $(\hat{a},\hat{m},\hat{\imath})\in X$ we can find a trajectory of the process $\boldsymbol \xi$  which joins it  with $(a_0,m_0,1)$.
Thus we can choose some neighbourhood $W$ of  $(\hat{a},\hat{m},\hat{\imath})$ and time $\bar t$ such that
if $z\in W$ than the process $\boldsymbol \xi$ starting from $z$ enters at time $\bar t$ the set $U$ with  probability $\ge  p_1$, where $p_1$ is a positive constant.
Hence 
\begin{equation}
\label{in-for-K} 
q(\bar t+t,x,z)\ge p_1\varepsilon_2/2
\end{equation}
for $z\in W$ and $x\in V$,
and consequently, condition (K) is fulfilled. 

Now we check that if $f^*$ is an invariant density for  the semigroup $\{P(t)\}_{t\ge 0}$ then $f^*>0$ a.e.
Let us take a point $y_0\in X$ such that  
the integral of $f^*$ over each neighbourhood of $y_0$ is positive.
Then from (\ref{in-for-K}) it follows that 
\begin{equation}
\label{nier:f>0}
\begin{aligned}
f^*(x)& =P(\bar t+t)f^*(x)\ge \int_X  q(\bar t +t,x,z)f^*(z)\,\mu(dz)\\
&\ge p_1\varepsilon_2/2 \int_W f^*(z)\,\mu(dz)>0,
\end{aligned}
\end{equation}
for $x\in V$.
Let  $\tilde x=(\tilde a,\tilde m_7,1)$, where 
\[
\tilde a=
\theta_1(t_1^0,t_2^0)= t-t_1^0-t_2^0-2\tau\quad
 \textrm{and}
\quad
 \tilde m_7=\theta_2(t_1^0,t_2^0).
\]
For $t_1=t_1^0$ we have $\tilde m_1=m_1(t_1^0)=\bar m $, $\tilde m_2=\pi_2(\tau,\bar m)$,
$\tilde m_3=\psi(\bar m)$. Let $\tilde m_4=m_4(t_1^0,t_2^0)=\pi_1(t_2^0,\tilde m_3)$. Since as $t^0_2$ we can choose any positive number, 
 $\tilde m_4$ can be any number from the interval $(\psi(\bar m),\infty)$. Let  $\tilde m_6=m_6(t_1^0,t_2^0) $. 
Then $\tilde m_6$ is any number from the interval $(c_1,\infty)$, where $c_1=0$ if $\psi(\bar m)\le m_P$ and $c_1=\psi^2(\bar m)$ if $\psi(\bar m)> m_P$.
Since $\tilde a= t-t_1^0-t_2^0-2\tau$,  $\tilde m_7=\pi_1(t-t_1^0-t_2^0-2\tau,\tilde m_6)$ and 
$t$ can be any number from the interval $(t_1^0+t_2^0+2\tau,\infty)$, $\tilde x$ can be any point from the set 
$A_{11}=\{(a,m,1)\colon \,a>0,\, m>\pi_1(a,c_1)\}$.  From $(\ref{nier:f>0})$ we obtain $f^*(a,m,1)>0$ for $(a,m)\in A_{11}$.
Since 
\[
f^*(0,m,2)=\varphi(m)\int_0^{\infty}f^*(a,m,1)\,da
\]
we have $f^*(0,m,2)>0$ for $m>c_2=\max(m_P,c_1)$.
 Hence $f^*(a,m,2)>0$ for $(a,m)\in A_{12}$, where  
$A_{12}=\{(a,m,2)\colon \,a\in [0,\tau],\, m>\pi_2(a,c_2)\}$.
Thus $f^*(0,m,1)>0$ for $m>c_3=\psi(c_2)$, and consequently
$f^*(a,m,1)>0$ for $(a,m)\in A_{21}$, where  
$A_{21}=\{(a,m,1)\colon \,a\ge 0,\, m>\pi_1(a,c_3)\}$
and 
$f^*(a,m,2)>0$ for $(a,m)\in A_{22}$, where  
$A_{22}=\{(a,m,2)\colon \,a\in [0,\tau],\, m>\pi_2(a,c_4)\}$, where $c_4=\max(m_P,c_3)$, etc.
Since  $c_{2k-1}=0$ for sufficiently large $k$,  we have
$f^*(a,m,1)>0$ for $A_1=\{(a,m,1)\colon \,a\ge 0,\, m>\pi_1(a,0)\}$
and  $f^*(a,m,2)>0$ for $A_2=\{(a,m,2)\colon \,a\ge 0,\, m>\pi_2(a,m_P)\}$. Hence 
$f^*>0$ a.e. on $X$.
Moreover, $f^*$ is the unique invariant density. Indeed, if a stochastic semigroup has two different invariant  densities $f_1$ and $f_2$,
then the function $(f_1-f_2)^+/ \|(f_1-f_2)^+ \|$ is also an invariant density and has the support smaller than $X$.

Therefore, if the  semigroup $\{P(t)\}_{t\ge 0}$ has 
an invariant density $f^*$, then this density is unique  and $f^*>0$ a.e. and
according to Theorem~\ref{asym-th2} this semigroup  is asymptotically stable.

If $\{P(t)\}_{t\ge0}$  has no invariant density, then according to Corollary~\ref{col-sw}
it is sweeping from compact sets.
\end{proof}

\section{Master equation}
\label{s: master} 
Theorem~\ref{fa-semi} guarantees that the  semigroup $\{P(t)\}_{t\ge0}$   
satisfies the Foguel alternative but if we want to check if this semigroup is asymptotically  stable or sweeping
we need to prove that it has or does not have an invariant density.
Time  evolution of densities can be described by some partial differential equations with boundary conditions
and knowing time independent solutions of this problem we can find an invariant density or check that such invariant density does not exist.    

Let $r(t,a,m):=f(t,a,m,1)$ and $p(t,a,m):=f(t,a,m,2)$. 
Then the functions $r$ and $p$ satisfy the following system of equations:
\begin{eqnarray}
\label{rownanie-r}
\frac{\partial r}{\partial t}
 +\frac{\partial r}{\partial a}
 +\frac{\partial(g_1(m)r)}{\partial m}
&=&-\varphi(m)r,\\
\frac{\partial p}{\partial t}
+\frac{\partial p}{\partial a}
+ \frac{\partial(g_2(m)p)}{\partial m}&=&
0,
\label{rownanie-p}
\end{eqnarray}
and the boundary conditions
\begin{equation}
\label{brzeg-r}
r(t,0,m)=k'(m)p(t,\tau,k(m)).
\end{equation}
\begin{equation}
p(t,0,m)=\varphi(m)\int_0^{\infty}r(t,a,m)\,da,
\label{brzeg-p}
\end{equation}
where $k=h^{-1}$. 

A similar system of equations was introduced in \cite{M-R}, where it described 
dynamics of a population of cells that are capable of simultaneous proliferation and maturation. 
That model includes,  among other things, mortality and does not lead directly to a stochastic semigroup.
In our case we replace one mother cell by one daughter cell which has allowed us to use a piecewise deterministic Markov process
in the model's description. 

 Let  $r(a,m)=\tilde f^*(a,m,1)$
and $p(a,m)=\tilde f^*(a,m,2)$, where  $\tilde f^*$ is given by 
(\ref{density-both-1p2}) 
and 
(\ref{density-both-2p2}).
It is not difficult to check that
$r(a,m)$ and $p(a,m)$ are solutions of (\ref{rownanie-r})--(\ref{rownanie-p}) with  boundary conditions 
(\ref{brzeg-r})--(\ref{brzeg-p}).
If 
\begin{equation}
\label{war:as-m2}
\int_0^{\infty} \int_0^{\infty}\tilde f^*(a,m,1)\,da\,dm+ \int_{m_P}^{\infty}\int_0^{\tau} \tilde f^*(a,m,2)\,da\,dm<\infty,
\end{equation}
then an invariant density exists and the semigroup $\{P(t)\}_{t\ge0}$ is asymptotically stable.
Condition (\ref{war:as-m2}) is equivalent to $T_R<\infty$, where $T_R$ is given by (\ref{wz:t_R}).
Moreover, one can check that $\tilde f^*$ is a unique, up to a  multiplicative constant,
 positive stationary solution  of (\ref{rownanie-r})--(\ref{brzeg-p}), which gives that if $t_R =\infty$ then 
 the semigroup has no stationary densities, and therefore it is sweeping from compact sets.
 We skip here the rigorous justification of  this statement.

The second integral  in $(\ref{war:as-m2})$  is finite, and therefore, in order to check 
if an invariant density exists it is enough to check that the first integral is finite.
In order to do it we investigate  the function 
 $R(t,m)$, which is the total number
of cells in the resting stage  with given maturity $m$ at time $t$, i.e.
\[
R(t,m)=\int_0^{\infty } r(t,a,m)\,da.
\]
Integrating equation (\ref{rownanie-r}) over the age variable $a$ and using
 boundary condition  (\ref{brzeg-r}) we obtain
\begin{equation}
\label{posr-R}
\frac{\partial R}{\partial t}+\frac{\partial(g_1R)}
{\partial m}= -\varphi(m)R + 
k'(m)p(t,\tau,k(m)).
\end{equation}
Applying the method  of characteristics to 
(\ref{rownanie-p})   and boundary condition (\ref{brzeg-p}) we find 
\[
\begin{aligned}
p(t,\tau,m)
&=
p(t-\tau,0,\pi_2(-\tau,m)) 
\frac{g_2(\pi_2(-\tau,m))}{g_2(m)}\\
&=\varphi(\pi_2(-\tau,m))R(t-\tau,\pi_2(-\tau,m))\frac{g_2(\pi_2(-\tau,m))}{g_2(m)}.
\end{aligned}
\]
Now equation (\ref{posr-R})  can be written in the following form
\[
\frac{\partial R}{\partial t}+\frac{\partial(g_1R)}
{\partial m}= -\varphi(m)R + 
 k'(m)\varphi(\lambda(m))
\frac{g_2(\lambda(m))}{g_2(k(m))}
R(t-\tau,\lambda(m)).
\]
We recall that $\lambda(m)=\pi_2(-\tau,k(m))$
and, in consequence, we finally obtain  
\begin{equation}
\label{posr-RF}
\frac{\partial R}{\partial t}+\frac{\partial(g_1R)}
{\partial m}= -\varphi(m)R + 
 \varphi(\lambda(m))\lambda'(m)
R(t-\tau,\lambda(m)).
\end{equation}
Now we are looking for a stationary solution of (\ref{posr-RF}). 
If  $R(m)$ satisfies (\ref{posr-RF}) then $R$ is a solution of the equation
\begin{equation}
\label{posr-RF-st}
(g_1R)'(m)= -\varphi(m)R(m) + 
 \varphi(\lambda(m))\lambda'(m)R(\lambda(m)).
\end{equation}
It is not surprising 
that if $\bar f^*(m,1)$ is given by (\ref{density-R-in}), then
$R(m)= \bar f^*(m,1)$ is a solution of (\ref{posr-RF-st}).
Moreover, if $R$ is a solution of  (\ref{posr-RF-st}) with $R(0)=0$, then the following formula holds:
\[
\varphi(m)R(m)=Q'(m)e^{-Q(m)}\int_{m_P}^{\lambda(m)}e^{Q(\lambda^{-1}(x))}\varphi(x)R(x)\,dx. 
\]
If we substitute $\tilde Q(m)=Q(\lambda^{-1}(m))$,  then  
$\tilde P(\varphi R)=\varphi R$,
where
\[
\tilde Pf(m)= \int_{m_P}^{\lambda(m)}\lambda'(m)\tilde Q'(\lambda(m))e^{\tilde Q(x)-\tilde Q(\lambda(m))}f(x)\,dx.
\]
Then 
\[
PU=U\tilde P, \quad Uf(m):=\lambda'(m)f(\lambda(m))
 \]
and $U$ is an isometric operator from $L^1[m_P,\infty)$ onto $L^1[0,\infty)$.  
Therefore, $\tilde P^n=U^{-1}P^nU$, and, in consequence, the operators $P$ and $\tilde P$ have the same asymptotic properties.  
Observe that $P$ has an invariant density $f^*$ if and only if  $\tilde f^*=U^{-1}f^*$ is an invariant density for $\tilde P$.

Let us assume that $P$ has an invariant density $f^*$ and $\varphi(m)\ge \varepsilon>0$ for sufficiently large $m$.
Since $R\varphi $ is a fixed point of $\tilde P$, 
we have $R\varphi=cU^{-1}f^*$ for some $c>0$.  
Hence, $\int_{m_P}^{\infty} R(m)\,dm<\infty$, which implies  that 
the  semigroup $\{P(t)\}_{t\ge0}$ is asymptotically stable.
According to Proposition~\ref{asym-P}, if 
$\liminf_{m\to\infty} Q(\lambda(m))-Q(m)>1$
and 
$\varphi(m)\ge \varepsilon>0$ for sufficiently large $m$, 
then the semigroup $\{P(t)\}_{t\ge0}$ is asymptotically stable.  

Now, we assume that $P$ has no invariant density and $\varphi$ is a bounded function.
Then $R$ cannot be an integrable function. Assume contrary to our claim, that $R$ is integrable. Then $R\varphi$ is an integrable function
and the operator $\tilde P$ has a positive fixed point. Hence $\tilde P$ and $P$ have invariant densities, a contradiction.
According to Proposition~\ref{asym-P}, if 
$Q(\lambda(m))-Q(m)\le 1$ for sufficiently large $m$ 
and $\varphi$ is bounded,
then the semigroup $\{P(t)\}_{t\ge0}$ is sweeping.

\begin{remark}
\label{r:rem-as-sw}
It can happen that the operator $P$ is asymptotically stable but the semigroup $\{P(t)\}_{t\ge0}$ is sweeping.
Indeed, if we choose $g_2$, $h$ and $\tau$ such that $\lambda(m)=m+2$ for $m\ge 0$
and we  choose $g_1$ and $\varphi$ such that $Q(m)=m$ for $m\ge 3$,   then
$\liminf\limits_{m\to\infty}( Q(\lambda(m))-Q(m))=2$ and the operator $P$ is asymptotically stable.
Let $f_*$ be an invariant density for $P$. The density $f^*$ depends only on $Q$ and $\lambda$, so we can 
choose $g_1$ and $\varphi$ such that  $\varphi(m)=g_1(m)=f^*(m-2)$ for $m\ge 3$. Then
$R(m)=cU^{-1}f^* (m)/\varphi(m)=c$. Consequently, $R$ is not integrable and the semigroup $\{P(t)\}_{t\ge0}$ is sweeping.
The explanation of this phenomenon is that in this example  the rate of entering the proliferating phase is very small for large $m$.
Then the mean length of the resting phase can be large and 
 more and more cells have arbitrary large maturity
as  $t\to\infty$.
 \end{remark}

\end{document}